\documentclass{article}
    \usepackage[title,titletoc,toc]{appendix}
\usepackage{fullpage}
\usepackage{setspace}
\usepackage[utf8]{inputenc}

\usepackage{comment}
\excludecomment{confidential}

\usepackage{dsfont}
    \usepackage{graphicx}
    \usepackage{amsmath, amssymb, latexsym, amsthm, url}
    \usepackage{mathrsfs,color,fancyhdr}
    \usepackage{cite}
    \usepackage{prettyref}







      \newtheorem{thm}{Theorem}[section]
      \newcommand{\bthm}{\begin{thm}} \newcommand{\ethm}{\end{thm}}
      \newtheorem{prop}[thm]{Proposition}
      \newcommand{\bprp}{\begin{prop}} \newcommand{\eprp}{\end{prop}}
      \newtheorem{fact}[thm]{Fact}
      \newcommand{\bfct}{\begin{fact}} \newcommand{\efct}{\end{fact}}
      \newtheorem{prob}[thm]{Problem}
      \newcommand{\bprb}{\begin{prob}} \newcommand{\eprb}{\end{prob}}
      \newtheorem{lem}[thm]{Lemma}
      \newcommand{\blem}{\begin{lem}} \newcommand{\elem}{\end{lem}}
      \newtheorem{claim}[thm]{Claim}
      \newcommand{\bclm}{\begin{claim}} \newcommand{\eclm}{\end{claim}}
      \newtheorem{cor}[thm]{Corollary}
      \newcommand{\bcor}{\begin{cor}} \newcommand{\ecor}{\end{cor}}
      \newtheorem{conj}[thm]{Conjecture}
      \newcommand{\bcnj}{\begin{conj}} \newcommand{\ecnj}{\end{conj}}
      \theoremstyle{definition}
      \newtheorem{defn}[thm]{Definition}
      \newcommand{\bdfn}{\begin{defn}} \newcommand{\edfn}{\end{defn}}
      \theoremstyle{remark}
      \newtheorem{rem}[thm]{Remark}
      \newcommand{\brem}{\begin{rem}} \newcommand{\erem}{\end{rem}}
      \newtheorem{cnv}[thm]{Convention}
      \newcommand{\bcnv}{\begin{cnv}} \newcommand{\ecnv}{\end{cnv}}
      \newtheorem{exam}[thm]{Example}
      \newcommand{\bexm}{\begin{exam}} \newcommand{\eexm}{\end{exam}}
      \newcommand{\bpf}{\begin{proof}} \newcommand{\epf}{\end{proof}}
      \newtheorem{exer}[thm]{Exercise}
      \newcommand{\bexer}{\begin{exer}} \newcommand{\eexer}{\end{exer}}
      \newcommand{\ben}{\begin{enumerate}}
      \newcommand{\een}{\end{enumerate}}
      \newcommand{\bit}{\begin{itemize}}
      
      \newcommand{\eit}{\end{itemize}}

        \newcommand{\reals}{{\mathbb R}}

        \newcommand{\Z}{{\mathbb Z}}
        
        \newcommand{\N}{{\mathbb N}}
        \newcommand{\CC}{\mathcal{C}}

        \newcommand{\F}{\mathcal{F}}
        \newcommand{\pr}{\mathbb{P}}

        \newcommand{\E}{\mathbb{E}}
        \newcommand{\om}{\omega}
        \newcommand{\Om}{\Omega}

        \newcommand{\ignore}[1]{}
       
       \newcommand{\one}{\mathds{1}}
	\newcommand{\dif}{\text{d}}

  \newcommand{\intpart}[1]{\lfloor #1 \rfloor}
  \newcommand{\Embedding}{\iota}
\newcommand{\norm}[1]{\vert\vert #1 \vert\vert_{G^2(\mathbb{R}^d)}}

\usepackage{color}

\definecolor{Red}{rgb}{1,0,0}
\definecolor{Blue}{rgb}{0,0,1}
\definecolor{Olive}{rgb}{0.41,0.55,0.13}
\definecolor{Yarok}{rgb}{0,0.5,0}
\definecolor{Green}{rgb}{0,1,0}
\definecolor{MGreen}{rgb}{0,0.8,0}
\definecolor{DGreen}{rgb}{0,0.55,0}
\definecolor{Yellow}{rgb}{1,1,0}
\definecolor{Cyan}{rgb}{0,1,1}
\definecolor{Magenta}{rgb}{1,0,1}
\definecolor{Orange}{rgb}{1,.5,0}
\definecolor{Violet}{rgb}{.5,0,.5}
\definecolor{Purple}{rgb}{.75,0,.25}
\definecolor{Brown}{rgb}{.75,.5,.25}
\definecolor{Grey}{rgb}{.5,.5,.5}

    \frenchspacing

    \date{\today}

    \title{Ballistic random walks in random environment as rough paths: convergence and area anomaly}
   \author{Olga Lopusanschi\footnote{Paris, {\tt Email: olga.lopusanschi@gmail.com}}
    \, and Tal Orenshtein\footnote{TU Berlin and WIAS Berlin, {\tt Email: orenshtein@wias-berlin.de}}}

\begin{document}
  \date{}
    \maketitle

\begin{abstract}
Annealed functional CLT in the rough path topology is proved for the standard class of ballistic random walks in random environment.
Moreover, the `area anomaly', i.e.\ a deterministic linear correction for the second level iterated integral of the rescaled path, is identified in terms
of a stochastic area on a regeneration interval.
The main theorem is formulated in more general settings, namely for any discrete process with uniformly bounded
increments which admits a regeneration structure where the regeneration times have finite moments.
Here the largest finite moment translates into the degree of regularity of the rough path topology.
In particular, the convergence holds in the $\alpha$-H\"older rough path topology for all $\alpha<1/2$ whenever all moments are finite, which is the case for the class of ballistic random walks in random environment. The latter may be compared to a special class of random walks in Dirichlet environments for which the regularity $\alpha<1/2$ is bounded away from $1/2$, explicitly in terms of the corresponding trap parameter.
\end{abstract}
\hfill

\thanks{\textit{2020 Mathematics Subject Classification:} 60K05, 60K37, 60K40, 60L20, 82B41}

\thanks{\textit{Key words:}\quad
Levy area, rough paths, annealed invariance principles, area anomaly, random walks in random environment, ballisticity conditions, regeneration structure}
\section{Introduction}\label{sec:intro}
Rough path theory has been extensively developing since it was introduced by T.~Lyons in `98 \cite{lyons1998differential}.
The theory provides a framework to solutions to SDEs driven by non-regular signals
such as Brownian motions, while keeping the solution map continuous with respect to the signal.
The It\^{o} theory of stochastic integration,
being an $L^2$ theory in essence, does not allow integration path-by-path,
and hence does not give rise to solutions with such continuity property.

As it was observed by Lyons, the difficulty is not only a technical issue;
in any separable Banach space $\mathcal{B}\subset\CC[0,1]$ containing the sample paths of Brownian motions a.s.\
the map $(f,g)\to \int_{0}^\cdot f(t)\dot g(t)\dif t$ defined on smooth maps \emph{cannot} be extended to a continuous map
on $\mathcal{B}\times\mathcal{B}$ (see \cite[Proposition 1.1]{friz2014course} and the references therein).
Some additional information on the path is needed to achieve continuity, namely the so called ``iterated integrals'',
where the number of iterations needed is determined by the regularity of the signal.

Fix $T>0$ and $X:[0,T]\to \reals^d$. The $M$-th level iterated integral of $X$ is
\begin{equation}\label{eq:N iterated integral}
S^{M}_{s,t}(X)=\int_{s<u_1<...<u_M<t}\dif X_{u_1} \cdots \dif X_{u_M},\, s<t, s,t\in [0,T].
\end{equation}
Note that the definition of iterated integrals assumes a notion of integration with respect to $X$.

Lyons' theory uses the information coming from the iterated integral as a postulated high level
information and constructs a space (called the rough path space) in which solutions to SDEs driven by Brownian motion are continuous with respect to the latter.
In this case two levels of iteration are enough since the Brownian motion is $\alpha$-H\"older for some $\alpha>\frac{1}{3}$
(and actually for all $\alpha<\frac{1}{2}$). More generally, roughly speaking,
in case the signal is $\alpha$-H\"older continuous for some $\alpha\in (0,1]$, then $M=\lfloor 1/\alpha \rfloor$ levels of iteration are sufficient (and necessary).

For discrete processes with regeneration structure such as ballistic random walks in random environment (RWRE),
invariance principles are well known.
Our main result, Theorem \ref{thm:area anomaly in general conditions}, shows that after lifting the path we have as well a
scaling limit in the rough path topology where the regularity is determined by the moments of the regenerations.

The application to the so-called ballistic RWREs, formulated in Theorem \ref{thm:area anomaly anneald rwre}, is then immediate, and since regeneration times have all moments \cite{sznitman2000slowdown} the convergence in
spaces of regularity $\alpha$ is taken all the way to $\alpha <\frac{1}{2}$. The theorem is also applied to random walks in Dirichlet environments with large enough trap parameter, where in this case the convergence is on a limited regularity space, see Theorem \ref{thm:area anomaly anneald Dirichlet rwre} for the precise statement.

When a scaling limit is known for some process in the uniform topology, one might be interested to get a richer information about the limit. For inhomogeneous random walks with regeneration structure, an interesting phenomenon yields.
As it turns out, unlike the ``classical'' invariance principles,
when considering the second level iterated integral, which is related to the running signed area of the process as
we show, the local fluctuations do \emph{not} disappear in the limit,
and a correction has to be considered.
Moreover, thanks to the i.i.d\ structure of the walk on regeneration intervals,
the law of large numbers allows us to write the correction as
a linear function in time $(t\Gamma)_{0\le t\le T}$, called the area anomaly. In particular, $\Gamma$ is a deterministic matrix which is the expected signed area accumulated in a regeneration interval, divided by its expected length,
see the main result, Theorem \ref{thm:area anomaly in general conditions}.

Another application is related to the Wong-Zakai type approximations of solutions to SDEs.
Let $(B^N)_N$ be a sequence of semimartingales converging weakly in the uniform topology to a Brownian motion $B$.
An interesting question is to understand the approximating differential equations, where the noise is replaces by $B^N$.
Let $X$ be a solution to a SDE with nice (in an appropriate sense) drift and diffusion coefficients and
let $X^N$ be a solution to corresponding difference equation driven by $B^{N}$.
The Wong-Zakai Theorem implies that it is not true in general that $X^N$ converges to $X$ whenever the convergence of the noise holds in the uniform topology \cite{wong1965convergence}.
However, if the weak convergence of $B^N$ to $B$ holds in the rough path space of regularity $\alpha$ with a linear area correction $t\Gamma$, for some $\alpha\in(\frac{1}{3},\frac{1}{2})$, then the answer is affirmative, where the SDE under consideration has to be modified by adding a drift term which is explicit in terms of $\Gamma$ \cite{kelly2016rough}.

Other aspect of noise approximations effects is related to SPDEs. The theory of rough path was strengthened with Gubinelli's notion of controlled rough path \cite{gubinelli2004controlling} and branched rough path \cite{gubinelli2010ramification} which extend the notion of integration and of solutions to differential equation with respect to an abstract data coming from the noise.
This then inspired Martin Hairer to develop the far-reaching theory of regularity structures \cite{hairer2014theory}, which is now extensively studied. A similar question is fundamental to SPDEs: what can
we learn on the solutions if rather than mollifying the noise by a smooth function, one takes more complicated approximations? For a recent progress in this direction, see \cite{bruned2019algebraic}.

Going back to the Brownian case, the fundamental result related to our work is the Donsker's invariance principle in the rough path topology \cite{breuillard2009random}. An extension to random walks with general covariances was proved in \cite{kelly2016rough}.

In \cite{lopusanschi2017area} and \cite{lopusanschi2017levy} the authors studied some discrete
processes converging to Brownian motion in $\mathbb{R}^d$ in the rough path topology with area anomaly which was constructed explicitly. Our main idea of our proof is inspired by theirs, with two main differences. First, we do not use the strong Markov property for the excursions, which, for a finitely supported
jump distribution implies that the excursions have exponential tail. Instead, we only assume i.i.d.\
   regeneration structure and moments of the regeneration times.
Second, the discrete processes in these papers are homogeneous in space (a simple random walk on periodic graphs
  \cite{lopusanschi2017area}, or hidden Markov walk where the jumps are independent of the current location \cite{lopusanschi2017levy}). In our case we allow the process to have jump distributions that are inhomogeneous in space.

  Another interesting example is a Brownian motion in magnetic field. Here the discretization converges to an enhanced Brownian motion with an explicit area anomaly.

  The problem of discrete processes seen as rough paths is dealt with in other contexts as well.
  \cite{kelly2016rough} and \cite{kelly2016smooth}, and the more recent \cite{friz2018differential} used the
  rough path framework to deal with discrete approximations of SDEs. The case of random walks on nilpotent covering graphs was considered in \cite{ishiwata2018central,ishiwata2018centralparttwo,namba2018remark}
where the corresponding area anomaly is identified in terms of harmonic embeddings (see \cite[equation (2.6)]{ishiwata2020central}).
Anther paper concerning discrete processes which is of relevance here is \cite{chevyrev2019canonical}. In that paper the
authors showed a general construction of rough SDEs allowing rough paths with jumps beyond linear interpolations.


\subsection{Structure of the paper}\label{sec:struture}
In order to keep the paper as self-contained as possible in Chapter \ref{sec:basics} we discuss
basic notions in rough path theory and set up the framework to be used in the rest of the paper.
In chapter \ref{sec:results} we formulate our main result,
Theorem \ref{thm:area anomaly in general conditions}. In chapter \ref{sec:examples} we give
some simple examples of processes converging in the rough path topology and lacking or having non-zero area anomaly. In Chapter \ref{applications} we present other special cases of our main result.
Particular cases are ballistic random walks in random environment, for which we also present an open problem, and random walks in Dirichlet environments. Finally, in Chapter \ref{sec:proof} we give the proof of Theorem \ref{thm:area anomaly in general conditions}.

\section{Basic notions in rough path theory}\label{sec:basics}
The aim of this section is to introduce briefly the basic objects in our framework. These are adapted from Chapters 2 and 3 of \cite{friz2014course}. The experienced reader can safely jump to Remark \ref{rem:why do we use G2}.
Since we assume here no familiarity of the reader with rough path theory we added a short discussion after Proposition \ref{prop: equiv of rough path norms} which is somewhat loosely formulated and should be treated accordingly. For an extensive account of the theory the reader is suggested to consult \cite{friz2010multidimensional} and \cite{friz2014course}.

Initially developed for solving differential equations, rough path theory is also useful in the discrete setting, and in particular for studying the convergence of discrete processes. For example,
in the uniform topology a simple random walk (SRW) on $\Z^2$ to which we add deterministic four steps clockwise loops every two steps
(see Figure \ref{fig:SRWLoop} below) converges to the same Brownian motion as a SRW which stays still for four steps every two steps.
Thus in the uniform topology the loops simply disappear at the limit.

\begin{figure}[!ht]
\begin{center}
\includegraphics[scale=0.4]{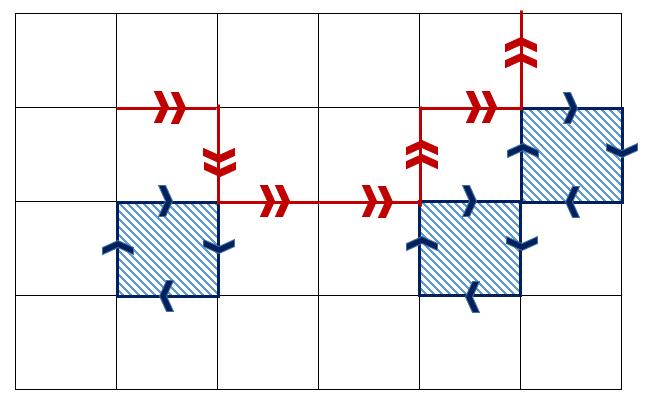}
\caption{A simple random walk with a deterministic loop every two steps. The double arrows (in red)
are the random walk's steps while the added loops are presented by the single arrows (in blue).}
   \label{fig:SRWLoop}
\end{center}
\end{figure}

The loops certainly do not play a role if one is interested in the limit trajectory only. However, if one wishes to study more aspects of the limit, the  accumulated area created by the loops could be also taken into consideration. A basic example for accumulated area in the continuous setting is provided by the ``bubble areas" of Lejay \cite{lejay2003introduction}. This weakness of the uniform topology is precisely one of the problems that rough path theory palliates.

Following \cite{friz2010multidimensional} we denote by $\otimes$ two different actions:
   \begin{itemize}
     \item[•] for two elements of vector spaces, it is the usual tensor product: if $V$ and $W$ are $d$-dimensional, respectively $d'$-dimensional vector
     spaces, for $v\in V$ and $w\in W$, $v\otimes w$ is the matrix $(v_iw_j)_{1\leq i\leq d,1\leq j\leq d'}$;
     \item[•] for two elements of a group (in our case, $G^2(\mathbb{R}^d)$, defined below), it denotes the corresponding group operation.
   \end{itemize}

The continuous process obtained by linear interpolation (or any other piecewise $C^1$ interpolation) of a discrete process, as well as its iterated integrals can be encoded in terms of elements of a particular nilpotent Lie group (see \cite[Section 2.3]{friz2014course} for more details).
For simplicity, and since our motivation in this paper is to prove convergence to Brownian motion, which is
$\alpha$-H\"oldar for all $\alpha<1/2$, we adapt the
general point of view taken in the book \cite{friz2014course} and consider \eqref{eq:N iterated integral}
in the case $M\le 2$, i.e.\ with only two levels of iteration. Therefore in the rest of the paper, we write $S_{s,t}(X)$ for $S_{s,t}^2(X)$. The pairs $(X_{s,t},S_{s,t}(X))$, $s<t$, with $X_{s,t}=X_t-X_s$, for a smooth path $X$, have a natural group structure with respect to increment concatenation. Here is the formal definition (see also the algebraic conditions in Proposition 2.4 for the corresponding formulation in terms of paths).

\begin{defn}[The group $G^2(\mathbb{R}^d)$]\label{def:GNV}
    The step-2 nilpotent Lie group $G^2(\mathbb{R}^d)\subset\mathbb{R}^d\oplus(\mathbb{R}^{d})^{\otimes 2}$ is defined as follows.
    An element can be presented by a pair $(a,b)\in \mathbb{R}^d\times \mathbb{R}^{d\times d}$
    (that is $a$ is a vector and $b$ is a matrix), the group operation $\otimes$ is defined by
      \begin{equation}\label{eq:group action}
       (a,b)\otimes (a',b') = (a+a',b+b'+a\otimes a'),
      \end{equation}
    and the following condition holds
    \begin{equation}\label{eq:group symmetric condition}
      \forall (a,b)\in G^2(\mathbb{R}^d),\ Sym(b)=\dfrac{1}{2}a\otimes a,
    \end{equation}
    where $Sym(\cdot)$ is the symmetric part of an element, that is $Sym(b)_{i,j}=\frac{1}{2}(b_{i,j}+b_{j,i})$.
    (For clarity, we emphasize that above we used $a\otimes a'= (a_i a'_j)_{i,j}$ for the tensor product).
  \end{defn}
  For an element $(a,b)$, $a$ and $b$ are called the first and the second level, respectively.

  The topology of $G^2(\mathbb{R}^d)$ is induced by the \textit{Carnot-Caratheodory norm} $\norm{\cdot}$,
which gives for an element $(a,b)\in G^2(\mathbb{R}^d)$ the length of the shortest path with bounded variation
that can be ``encoded" as $(a,b)$, i.e.\ whose increment is $a$ and whose iterated integral is $b$.
In other words
\[
\norm{(a,b)}:=\inf\left\{\int_0^1 |\dot{\gamma}(t)|\dif t\, \Big{|}\, \gamma:[0,1]\to\reals^d \text{ is of bounded variation},\, (\gamma_{0,1},S_{0,1}(\gamma))=(a,b)\right\}.
\]
 Showing that the set on which the infimum is taken is non-empty is a non-trivial statement and is the content of Chow's Theorem, see \cite[Theorem 7.28]{friz2010multidimensional}.
The norm defined in this fashion induces a continuous metric $\mathbf{d}$ on $G^2(\mathbb{R}^d)$
through the application
\begin{align}
  \label{def:CarnotCaraDistance}
  \begin{array}{cccc}
  \mathbf{d}: & G^2(\mathbb{R}^d) \times G^2(\mathbb{R}^d) &\to      &\mathbb{R}_{+}\\
     & (g,h)                   &\mapsto    &\norm{g^{-1}\otimes h}
  \end{array}
.\end{align}
$(G^2(\mathbb{R}^d),\mathbf{d})$ is then \emph{a geodesic space}, i.e.\ any two points can
be connected by a geodesic.

  \begin{defn}[Rough paths on $G^2(\mathbb{R}^d)$]\label{def:RoughPaths}
    Let $1/3<\alpha<1/2$. An $\alpha$-H\"older geometric
    rough path on $G^2(\mathbb{R}^d)$ is an element $\mathbf{X}=(X,\mathbb{X})\in\mathcal{C}^{\alpha}([0,T],G^2(\mathbb{R}^d))$.
    More preciesly, $(X_{s,t},\mathbb{X}_{s,t}), s,t\in[0,T],s<t$, are in $G^2(\mathbb{R}^d)$
    and the path is an $\alpha$-H\"older continuous function with respect to the distance $\mathbf{d}$.
  \end{defn}
Without going into details we remark that rough path theory also deals with rough paths which are not geometric, i.e., those for which \eqref{eq:group symmetric condition} does not hold.

An example in the probabilistic setting of an $\alpha$-H\"older geometric
    rough path, for any $\alpha\in(\frac{1}{3},\frac{1}{2})$ is the \emph{Brownian motion rough path},
which is also known as the \emph{enhanced Brownian motion}. It is constructed using Stratonovich integration as follows:
   \begin{align*}
 ({B}_{s,t}, \mathcal{S}_{s,t}) = \Big(B_t - B_s, \int_{s\le u<v\le t} \circ dB_{u}\otimes \circ dB_{v}\Big),\,\, 0\leq s < t.
    \end{align*}

The group structure on $G^2(\mathbb{R}^d)$ and the Carnot-Caradeodory norm and distance are particularly tamed for treating path concatenations.
For example the norm is sub-additive. In particular, for a path $\mathbf{X}=(X,\mathbb{X})$ which takes value in $G^2(\mathbb{R}^d)$
let $\mathbf{X}_{s,t}:= \mathbf{X}_{s}^{-1} \otimes \mathbf{X}_{t}$.
Then for every $s<u<t$
\begin{equation}\label{eq:CC norm subadditive}
    \norm{\mathbf{X}_{s,t}} = \norm{\mathbf{X}_{s,u}\otimes \mathbf{X}_{u,t}} \le  \norm{\mathbf{X}_{s,u}}+ \norm{\mathbf{X}_{u,t}}.
\end{equation}
The next proposition can found be useful for actual estimations. It leans on the equivalence
    \begin{equation}\label{eq:CC norm bound}
    C^{-1} \le \frac{\norm{(a,b)}}{|a|_{\mathbb{R}^d}+|b|_{\mathbb{R}^d\otimes\mathbb{R}^d}^{1/2}}\leq C
    \end{equation}
    for some $C\ge 1$, where $|\cdot|_{\reals^d}$ is the Euclidean norm on $\reals^d$ and $|\cdot|_{\mathbb{R}^d\otimes\mathbb{R}^d}$ is
    the matrix norms induced by the vector norm
  $|\cdot|_{\mathbb{R}^d}$.

 Assume that $(X_{s,t},\mathbb{X}_{s,t})$, $0\le s<t\le T$ take value in $\reals^d \times \reals^{d\times d}$.
 Define
    \begin{equation}\label{eq:alpha holder norm on pairs}
    |||(X,\mathbb{X})|||_{\alpha}:=||X||_{\alpha}+||\mathbb{X}||_{2\alpha},
    \end{equation}
    where
    \begin{equation}\label{eq:alpha holder norm}
    ||X||_{\alpha}=\sup_{s<t, s,t\in[0,T]}\frac{|X_{s,t}|_{\reals^d}}{|t-s|^{\alpha}} \text{ and } \,\,\,
    ||\mathbb{X}||_{2\alpha}=\sup_{s<t, s,t\in[0,T]} \frac{|\mathbb{X}_{s,t}|_{\reals^d \otimes \reals^d}}{|t-s|^{2\alpha}}.
    \end{equation}

\begin{prop}(\cite[Proposition 2.4]{friz2014course})\label{prop: equiv of rough path norms}
Let $\alpha\in(\frac 13,\frac 12 ]$. $\mathbf{X}=(X,\mathbb{X}):\{0\le s< t\le T\}\to \reals^d \times \reals^{d\times d}$ is a geometric rough path as in Definition \ref{def:RoughPaths} if and only if $X_{s,t}=X_t-X_s$
and the following assumptions hold:
\begin{itemize}
\item $|||(X,\mathbb{X})|||_{\alpha}<\infty$.
\item $\mathbb{X}_{s,t} = \mathbb{X}_{s,u} + \mathbb{X}_{u,t} + X_{s,u}\otimes X_{u,t} $ for every $s<u<t$ (Chen's relation).
 \item $Sym(\mathbb{X}_{s,t})  = \frac{1}{2} X_{s,t}\otimes X_{s,t}$ for every $s<t$ (integration by parts property).
 \end{itemize}
\end{prop}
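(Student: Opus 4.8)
The plan is to prove the two implications separately; in both directions the work amounts to unwinding Definition \ref{def:GNV}, the group law \eqref{eq:group action}, the membership constraint \eqref{eq:group symmetric condition}, and the norm comparison \eqref{eq:CC norm bound} between the Carnot--Carath\'eodory norm $\norm{\cdot}$ and the parabolically weighted quantity $|a|_{\reals^d}+|b|_{\reals^d\otimes\reals^d}^{1/2}$.

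For the implication ``geometric rough path $\Rightarrow$ the three properties'', let $\mathbf{X}$ be an $\alpha$-H\"older geometric rough path as in Definition \ref{def:RoughPaths} and write $\mathbf{X}_{s,t}=\mathbf{X}_s^{-1}\otimes\mathbf{X}_t=(X_{s,t},\mathbb{X}_{s,t})$. Multiplicativity $\mathbf{X}_{s,t}=\mathbf{X}_{s,u}\otimes\mathbf{X}_{u,t}$ holds tautologically because $(\mathbf{X}_s^{-1}\otimes\mathbf{X}_u)\otimes(\mathbf{X}_u^{-1}\otimes\mathbf{X}_t)=\mathbf{X}_s^{-1}\otimes\mathbf{X}_t$, and expanding it via \eqref{eq:group action} gives simultaneously $X_{s,t}=X_{s,u}+X_{u,t}$ --- hence $X_{s,t}=X_t-X_s$ --- and Chen's relation $\mathbb{X}_{s,t}=\mathbb{X}_{s,u}+\mathbb{X}_{u,t}+X_{s,u}\otimes X_{u,t}$. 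The integration-by-parts identity $\mathrm{Sym}(\mathbb{X}_{s,t})=\frac{1}{2}X_{s,t}\otimes X_{s,t}$ is exactly the constraint \eqref{eq:group symmetric condition} satisfied by the element $\mathbf{X}_{s,t}\in G^2(\reals^d)$. Finally, $\alpha$-H\"older continuity in the metric $\mathbf{d}$ means $\sup_{s<t}|t-s|^{-\alpha}\norm{\mathbf{X}_{s,t}}<\infty$, and by \eqref{eq:CC norm bound} the quantity under this supremum is comparable, uniformly in $s,t$, to
\[
\frac{|X_{s,t}|_{\reals^d}}{|t-s|^{\alpha}}+\left(\frac{|\mathbb{X}_{s,t}|_{\reals^d\otimes\reals^d}}{|t-s|^{2\alpha}}\right)^{1/2};
\]
since a supremum of a sum of nonnegative quantities is finite if and only if each summand has finite supremum, this yields $\|X\|_\alpha<\infty$ and $\|\mathbb{X}\|_{2\alpha}<\infty$, i.e.\ $|||(X,\mathbb{X})|||_\alpha<\infty$ by \eqref{eq:alpha holder norm on pairs}.

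For the converse, assume $X_{s,t}=X_t-X_s$ together with the three listed properties. The integration-by-parts property is verbatim the constraint \eqref{eq:group symmetric condition}, so each pair $(X_{s,t},\mathbb{X}_{s,t})$ already lies in $G^2(\reals^d)$. Set $\mathbf{X}_t:=(X_{0,t},\mathbb{X}_{0,t})$, with $\mathbf{X}_0$ the identity; using that $(a,b)^{-1}=(-a,-b+a\otimes a)$, which follows directly from \eqref{eq:group action}, and then substituting Chen's relation for $\mathbb{X}_{0,t}$ applied to the triple $(0,s,t)$, a short computation shows $\mathbf{X}_s^{-1}\otimes\mathbf{X}_t=(X_{s,t},\mathbb{X}_{s,t})$. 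Thus the two-parameter data is genuinely the increment process of the path $t\mapsto\mathbf{X}_t\in G^2(\reals^d)$, and it remains only to check H\"older continuity: by \eqref{def:CarnotCaraDistance} and \eqref{eq:CC norm bound},
\begin{align*}
\mathbf{d}(\mathbf{X}_s,\mathbf{X}_t)=\norm{(X_{s,t},\mathbb{X}_{s,t})}
&\le C\bigl(|X_{s,t}|_{\reals^d}+|\mathbb{X}_{s,t}|_{\reals^d\otimes\reals^d}^{1/2}\bigr)\\
&\le C\bigl(\|X\|_\alpha+\|\mathbb{X}\|_{2\alpha}^{1/2}\bigr)\,|t-s|^{\alpha},
\end{align*}
which is finite since $|||(X,\mathbb{X})|||_\alpha<\infty$; hence $\mathbf{X}\in\mathcal{C}^{\alpha}([0,T],G^2(\reals^d))$ is an $\alpha$-H\"older geometric rough path.

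Once \eqref{eq:CC norm bound} is granted --- and it is recorded above --- no real obstacle remains: both directions are bookkeeping with the group law. The genuinely substantive input is that comparison, which is where the geometry of $G^2(\reals^d)$ enters, and I would justify it via Chow's theorem (so that the infimum defining $\norm{\cdot}$ runs over a nonempty set, making $\norm{\cdot}$ a finite, continuous norm) together with a homogeneity-and-compactness argument: both $\norm{\cdot}$ and $(a,b)\mapsto|a|_{\reals^d}+|b|_{\reals^d\otimes\reals^d}^{1/2}$ are continuous, strictly positive off the identity, and $1$-homogeneous under the dilations $\delta_\lambda(a,b)=(\lambda a,\lambda^2 b)$, hence comparable on the compact level set $\{(a,b)\in G^2(\reals^d):\norm{(a,b)}=1\}$ and therefore, by scaling, comparable everywhere.
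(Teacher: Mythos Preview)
The paper does not give its own proof of this proposition: it simply records the statement with a citation to \cite[Proposition 2.4]{friz2014course}. So there is nothing in the paper to compare your argument against.

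That said, your proof is correct and is essentially the standard one. Both implications are, as you observe, bookkeeping once the group law \eqref{eq:group action}, the membership constraint \eqref{eq:group symmetric condition}, and the norm equivalence \eqref{eq:CC norm bound} are in hand: multiplicativity of increments unpacks to Chen's relation on the second level and additivity on the first; the $G^2$ constraint is verbatim the integration-by-parts identity; and the comparison \eqref{eq:CC norm bound} converts $\alpha$-H\"older continuity in $\mathbf{d}$ into the two separate bounds $\|X\|_\alpha<\infty$, $\|\mathbb{X}\|_{2\alpha}<\infty$ and back. Your reconstruction of the one-parameter path $\mathbf{X}_t:=(X_{0,t},\mathbb{X}_{0,t})$ in the converse direction, together with the verification that its $G^2$-increments reproduce $(X_{s,t},\mathbb{X}_{s,t})$ via Chen's relation at $(0,s,t)$, is exactly what is needed and is carried out cleanly. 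The closing remark on justifying \eqref{eq:CC norm bound} via homogeneity under $\delta_\lambda$ and compactness of the unit sphere is also the standard route (and matches how the equivalence is obtained in \cite{friz2010multidimensional}); the paper here simply takes \eqref{eq:CC norm bound} as given.
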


To end this brief review we mention an alternative definition of the group which has some nice interpretation
in terms of signed area.
A path $\mathbf{X}$ considered in Definition \ref{def:RoughPaths} has
increments in $G^2(\mathbb{R}^d)$. This is relevant for the notion of integration, which is, roughly speaking, defined based on ``sewing" according to the increments.
However, since the symmetric part of the second level depends entirely on the first level by definition, to handle path increments
the following alternative definition for the group on which the rough paths are considered is sometimes more useful.
The corresponding antisymmetric group operation $\wedge$
is defined by
    \begin{align}\label{eq:Area}
 	(a,b)\wedge (a',b') = (a+a',b+b'+ \dfrac{1}{2} (a\otimes a' - a'\otimes a)).
       \end{align}
In particular, unlike the case of the law $\otimes$ where an element $(a,b)$ represents a path
with an increment $a$ and an iterated integral $b$, in the case of the antisymmetric product an element $(a,b)$ represents a path where
$a$ is still an increment but $b$ is now the corresponding area. In other words, for $(X,\mathbb{X})\in G^2(\mathbb{R}^d)$ we consider $(X,A)$ instead, where $A_{s,t}^{i,j}=\frac{1}{2}(\mathbb{X}_{s,t}^{i,j}-\mathbb{X}_{s,t}^{j,i})$.

For example, the Brownian motion considered as a rough path in the case of the antisymmetric product $\wedge$ has the form
\begin{align}
  \label{eq:RoughBrownianAnti}
 \mathbb{B}^{\wedge}_{s,t} = (B_t - B_s, \mathcal{A}_{s,t}),\,\, 0\leq s\leq t,
\end{align}
where $\mathcal{A}$ is the stochastic signed area of $B$, called \textit{the Lévy area}. One also remark that the Lévy area is invariant under performing the integration in either the Stratonovich or the It\^o sense.



The operation defined in \eqref{eq:Area} has a geometric interpretation which shows why the group is suitable for concatenating paths. The first level translates into path concatenation, whereas the second one gives the law of the ``area concatenation'' (signed area of concatenated paths).
Figure \ref{fig:AreaConcatenation} demonstrates how to calculate the signed area of two concatenated curves.
The areas of $\gamma_1$, $\gamma_2$ and that of the triangle
(formed by the increments of $\gamma_1$ and $\gamma_2$) in the figure correspond respectively
to $b$, $b'$ and $\dfrac{1}{2} (a\otimes a' - a'\otimes a$) in formula \eqref{eq:Area}.
This rule for the area of concatenated paths is also commonly referred as the Chen's rule.
It plays a fundamental role in the theory of
rough paths.

\begin{figure}[!th]
\begin{center}
\includegraphics[scale=0.4]{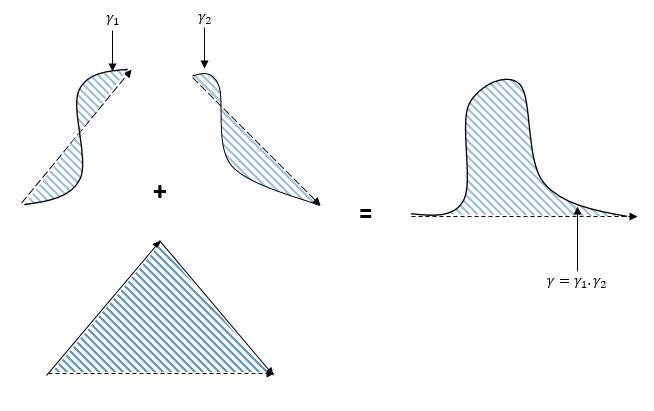}
\caption{A geometric demonstration of Chen's rule on the area of concatenated paths.}
   \label{fig:AreaConcatenation}
\end{center}
\end{figure}

\begin{rem}\label{rem:why do we use G2}
In view of Proposition \ref{prop: equiv of rough path norms}, one can use its assertion as a \emph{definition} for $\alpha$-H\"older rough paths. This is sometimes preferable if one wishes to avoid the Lie group construction. However, in this paper we find the group presentation useful, mainly for the proof of the main result of the paper, Theorem \ref{thm:area anomaly in general conditions}, see section \ref{sec:proof}. Moreover, Step 1 and 4 of the proof are based on \cite{breuillard2009random} which is formulated in the Lie group language. Also, the group actions are useful for presenting computations in a compact way, see in Step 2 of the proof.
\end{rem}

\section{Main result}\label{sec:results}

For a sequence $X=(X_n)_n$ of elements of $\reals^d$, its continuous rescaled version $X^{(N)}_\cdot$ is defined by
\begin{align*}
X^{(N)}_t=\frac{1}{\sqrt{N}}\big(X_{\intpart{Nt}} + (Nt-\intpart{Nt}) (X_{\intpart{Nt}+1}-X_{\intpart{Nt}})\big).
\end{align*}
We denote the lift of $X^{(N)}$ to a rough path by
\begin{equation}
  \label{eq:Embedding}
\Embedding^{(N)}(X)_{s,t} :=
\Big(X^{(N)}_{s,t}, S_{s,t}(X^{(N)}) \Big),
\end{equation}
where $S_{s,t}(X^{(N)})$ is the second level iterated integral of $X^{(N)}$ between $s$ and $t$, $S_{s,t}:=S^2_{s,t}$, as defined in \eqref{eq:N iterated integral}, and the integration is in the Riemann-Stieltjes sense (which is well-defined since $X^{(N)}$ is of bounded variation on every compact interval $[s,t]$).
%
One can check that
for natural numbers $m<n$, the associated second level iterated integral has the following form
 \begin{equation}\label{eq:discrete iterated integral vs continuous}
S_{m,n}^{i,j}(X^{(1)})= \sum_{m+1\le k < \ell \le n}\Delta X_k^{i} \Delta X_{\ell}^{j} + \frac12 \sum_{m+1\le k \le n}\Delta X_k^{i} \Delta X_{k}^{j},
\end{equation}
where $\Delta X_k:=X_k-X_{k-1}=X_{k-1,k}$ are the increments.

\begin{defn}\label{def:discrete levy area}
For a path $Y$ in $\reals^d$ of bounded variation we define the area
\begin{equation}
A_{s,t}^{i,j}(Y)= \dfrac{1}{2}( S_{s,t}^{i,j}(Y) - S_{s,t}^{j,i}(Y))
\end{equation}
as the antisymmetric part of the iterated integral of $Y$.
Set also $S_t(Y):=S_{0,t}(Y)$ and $A_t(Y):=A_{0,t}(Y)$.
\end{defn}

\begin{defn}\label{def:reg structure}
Let $(X,\F,\pr)$ be a discrete time stochastic process on $\reals^d$.
 We say that $X$ admits a \emph{regeneration structure} if
 there are $ \F$-measurable integer valued random variables $(\tau_k)_{k\in\N_0}$ so that
 $0=\tau_0<\tau_1<\tau_2<...< \infty$ $\pr$-a.s.\
and
\[
\Big(\tau_{k}-\tau_{k-1},\{X_{\tau_{k-1}, \tau_{k-1} + m}: 0 \le m \le \tau_k - \tau_{k-1}\}\Big)
\]
 are independent random variables for $k\ge 1$, and have the same distribution for all $k\ge 2$.
 \end{defn}

\begin{thm}\label{thm:area anomaly in general conditions}
Let $X$ be a discrete time stochastic process on $\reals^d$ with bounded jumps $|X_{n+1}-X_n|_{\reals^d}\le K$ $\pr$-a.s. Assume that $X$ admits a
regeneration structure in the sense of Definition \ref{def:reg structure} and
let $(\tau_k)_{k\ge 0}$ be the corresponding regeneration times.
Assume further that
$X$ satisfies a strong law of large numbers
$$\pr\left(\lim_{n\to\infty}\frac{X_n}{n}= v\right)=1.$$
In this case the speed $v\in \reals^{d}$ is defined by
$$v:=\frac{\E[ X_{\tau_1,\tau_2}]}{\E[\tau_2-\tau_1]}.$$
Also, assume that
$\bar X_n =X_n-nv$ satisfies an annealed invariance principle with covariance matrix
\[
M= \frac{\E[ \bar X^{(1)}_{\tau_1,\tau_2} \otimes \bar X^{(1)}_{\tau_1,\tau_2}]}{\E[\tau_2-\tau_1]}.
\]
Last assumption is the following moment condition:
\begin{equation}
   \label{eq:momentcondition}
\E[(\tau_k-\tau_{k-1})^{2p}]<\infty
\end{equation}
for some $p\ge 4$.
Then we have the following weak convergence with respect to $\pr$
to
\[
\Embedding^{(N)} (\bar X) \Rightarrow (B_{s,t},\mathcal{S}_{s,t} + (t-s) \Gamma)_{0\le s<t\le T} \,\text{  in   }\, \mathcal{C}^{\alpha}\left([0,T],G^2(\mathbb{R}^d)\right)
\]
 for all $\alpha\in(\frac{1}{3},\frac{p^*-1}{2p^*})$,
 where $p^*=\min\{\lfloor p \rfloor,2 \lfloor p/2 \rfloor \}$,
 and the couple $(B,\mathcal{S})$ are the Brownian motion with covariance matrix $M$ and its
 second level iterated Stratonovich integral process.
Moreover, the correction is the antisymmetric matrix
$$
\Gamma=\frac{\E[A_{\tau_1,\tau_2}(\bar X^{(1)})]}{\E[\tau_2-\tau_1]}.
$$
In particular, if the moment condition
holds true for all $1\le p<\infty$ then the convergence holds true for all $\alpha<\frac{1}{2}$.
\end{thm}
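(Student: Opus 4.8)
\emph{Overview.} The plan has two parts. Part (A) proves that $\Embedding^{(N)}(\bar X)$ converges to $(B_{s,t},\mathcal{S}_{s,t}+(t-s)\Gamma)$ in the \emph{uniform} topology on $G^2(\mathbb{R}^d)$-valued paths, locating the area anomaly through the Chen/group structure and a law of large numbers on regeneration blocks. Part (B) proves tightness of $\big(\Embedding^{(N)}(\bar X)\big)_N$ in $\mathcal{C}^{\alpha}([0,T],G^2(\mathbb{R}^d))$ by a Kolmogorov-type moment criterion. Together, Parts (A) and (B) give the asserted weak convergence in $\mathcal{C}^{\alpha}$ for every $\alpha<(p^*-1)/(2p^*)$; it is in (B) that the moment condition \eqref{eq:momentcondition} is used and where the regularity threshold is determined. (Note $p\ge4$ is exactly what makes this threshold exceed $1/3$.)

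\emph{Part (A).} Fix $t$, put $n=\lfloor Nt\rfloor$, and let $N_n=\max\{k:\tau_k\le n\}$ be the number of completed blocks, so $N_n/n\to1/\E[\tau_2-\tau_1]$ a.s.\ by renewal theory. The symmetric part of the second level of $\Embedding^{(N)}(\bar X)$ equals $\tfrac12\bar X^{(N)}\otimes\bar X^{(N)}$ for free (geometric rough path property), hence converges along with the assumed first-level invariance principle. For the antisymmetric part, partition $[0,n]$ along the regeneration times and apply the concatenation law \eqref{eq:group action}, equivalently Chen's relation from Proposition \ref{prop: equiv of rough path norms} in the antisymmetric form \eqref{eq:Area}: this expresses
\[
A_{0,n}(\bar X^{(1)})=\sum_{k=1}^{N_n}A_{\tau_{k-1},\tau_k}(\bar X^{(1)})+A^{\mathrm{poly}}_{0,N_n}+A_{\tau_{N_n},n}(\bar X^{(1)})+\text{(cross boundary terms)},
\]
with $A^{\mathrm{poly}}_{0,N_n}$ the signed area of the polygon through $0=\bar X_{\tau_0},\bar X_{\tau_1},\dots,\bar X_{\tau_{N_n}}$, i.e.\ the L\'evy area of the coarse-grained walk $W_k:=\bar X_{\tau_k}$. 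Since the drift $n\mapsto nv$ is deterministic, each $A_{\tau_{k-1},\tau_k}(\bar X^{(1)})$ is a function of the $k$-th regeneration block alone, hence these are i.i.d.\ for $k\ge2$ with finite mean (they are $O((\tau_k-\tau_{k-1})^2)$ and $p\ge1$); by the law of large numbers $\tfrac1n\sum_{k=1}^{N_n}A_{\tau_{k-1},\tau_k}(\bar X^{(1)})\to\Gamma$ a.s., whence $\tfrac1N\sum_{k=1}^{N_n}A_{\tau_{k-1},\tau_k}(\bar X^{(1)})\to t\,\Gamma$ — the area anomaly. The coarse-grained walk $W$ is a centered i.i.d.\ random walk with increment covariance $\E[\tau_2-\tau_1]\,M$; by the rough path invariance principle of \cite{breuillard2009random,kelly2016rough} applied to $W$, together with the time change $N_n/n\to1/\E[\tau_2-\tau_1]$ (the covariances being consistent with the assumed first-level invariance principle), the rescaled $A^{\mathrm{poly}}$ converges to the L\'evy area $\mathcal{A}_{0,t}$ of $B$. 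Finally $A_{\tau_{N_n},n}(\bar X^{(1)})$ and the cross boundary terms involve a single (size-biased) block length and vanish after rescaling by \eqref{eq:momentcondition}. Running the same decomposition for increments $\Embedding^{(N)}(\bar X)_{s,t}$ and at finitely many times identifies every limit point as $(B_{s,t},\mathcal{S}_{s,t}+(t-s)\Gamma)$.

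\emph{Part (B).} By the Kolmogorov criterion for H\"older rough paths (see \cite{friz2010multidimensional,friz2014course}) and \eqref{eq:CC norm bound}, it suffices to establish, uniformly in $N$,
\[
\E\big[|\bar X^{(N)}_{s,t}|^{2p^*}\big]\le C|t-s|^{p^*}\quad\text{and}\quad\E\big[|S_{s,t}(\bar X^{(N)})|^{p^*}\big]\le C|t-s|^{p^*},
\]
which yields tightness in $\mathcal{C}^{\alpha}$ for every $\alpha<\tfrac12-\tfrac1{2p^*}=\tfrac{p^*-1}{2p^*}$; combined with Part (A) this upgrades the convergence to weak convergence in every such $\mathcal{C}^{\alpha}$. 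Writing $\ell=\lfloor Nt\rfloor-\lfloor Ns\rfloor$ and splitting $[\lfloor Ns\rfloor,\lfloor Nt\rfloor]$ into regeneration blocks, the first bound follows from Rosenthal/Burkholder inequalities for the i.i.d.\ block increments together with control of the at most two incomplete boundary blocks via $\E[(\tau_k-\tau_{k-1})^{2p}]<\infty$. For the second bound we use the explicit discrete form \eqref{eq:discrete iterated integral vs continuous}: expanding the $p^*$-th moment of $A_{\lfloor Ns\rfloor,\lfloor Nt\rfloor}(\bar X^{(1)})$ — and of the polygonal and block-area pieces — into a sum over tuples of increments, the i.i.d.\ block structure and the centering of the increments kill all but $O(\ell^{p^*})$ of the terms, so $\E\big[|S_{\lfloor Ns\rfloor,\lfloor Nt\rfloor}(\bar X^{(1)})|^{p^*}\big]\lesssim\ell^{p^*}\asymp(N|t-s|)^{p^*}$, and dividing by $N^{p^*}$ closes the estimate. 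It is this combinatorial expansion — transparent only for even exponents — together with the number of moments of $\tau$ consumed by the cross-terms and boundary blocks, that forces $p^*$ to be the largest even integer not exceeding $p$, i.e.\ $p^*=\min\{\lfloor p\rfloor,2\lfloor p/2\rfloor\}$.

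\emph{Main obstacle and remarks.} The principal difficulty is the second moment bound in Part (B): one must control at once the random number of blocks inside $[\lfloor Ns\rfloor,\lfloor Nt\rfloor]$, the heavy-tailed (size-biased) lengths of the two incomplete boundary blocks, and the cross-terms between blocks, while bookkeeping how many moments of $\tau$ each contribution consumes; the sharp power $|t-s|^{p^*}$ and the parity of $p^*$ are what come out. The remaining ingredients — the first-level invariance principle (assumed), the passage from uniform to $\mathcal{C}^{\alpha}$ convergence once tightness is known, and the Lie-group bookkeeping of Parts (A) and (B) — follow the template of \cite{breuillard2009random} and \cite{lopusanschi2017area,lopusanschi2017levy}, the new feature compared with the latter being the spatial inhomogeneity, which is exactly what produces a nonzero $\Gamma$. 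The closing assertion of the theorem is immediate: if all moments of $\tau_k-\tau_{k-1}$ are finite then $p^*\to\infty$ and $(p^*-1)/(2p^*)\to\tfrac12$.
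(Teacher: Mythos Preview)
Your proposal is correct and follows essentially the same strategy as the paper: decompose along regeneration blocks via Chen's relation, apply the rough-path Donsker theorem of \cite{breuillard2009random} to the coarse-grained walk $W_k=\bar X_{\tau_k}$, use the law of large numbers on the block areas to extract $\Gamma$, show the boundary blocks vanish via moment bounds, and prove tightness by the Kolmogorov criterion. The paper organizes the finite-dimensional convergence in three sub-steps (first the subsequence $\Embedding^{(\tau_r)}$, then the full sequence via a size-biased boundary estimate) and, for tightness, leans directly on the moment bound already established in \cite{breuillard2009random} combined with sub-additivity of the Carnot--Carath\'eodory norm \eqref{eq:CC norm subadditive} rather than redoing the combinatorial moment expansion from scratch, but these are implementation details rather than a different route.
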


\begin{rem}
The corresponding result for the area with the same correction $\Gamma$ holds as well, that is
whenever the path is considered with the antisymmetric operation, $S(X)$ is replaced by $A(X)$ and
the enhanced Brownian motion $\mathcal{S}$ is replaced by the Stratonovich Levy area $\mathcal{A}$.
\end{rem}
The correction matrix $\Gamma$ has the following decomposition.
\begin{lem}
The following decomposition holds
\[
\Gamma\; =\; \frac{\E[A_{\tau_1,\tau_2}(X^{(1)})]}{\E[\tau_2-\tau_1]} \; + \;
\dfrac{\E\left[\sum_{\tau_1 < k<\ell \leq \tau_2}(v\otimes \Delta X_k + \Delta  X_{\ell}\otimes v) - (\Delta X_k \otimes v + v \otimes \Delta  X_{\ell})\right]}{2\E[\tau_2-\tau_1]}.
\]
\end{lem}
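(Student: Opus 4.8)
The plan is to reduce the statement to the explicit discrete formula \eqref{eq:discrete iterated integral vs continuous} and then expand bilinearly; the identity is essentially a bookkeeping rule describing how the (discrete) Lévy area transforms when a linear drift is removed.

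First I would record the elementary observation that $\Delta\bar X_k := \bar X_k-\bar X_{k-1} = (X_k-kv)-(X_{k-1}-(k-1)v) = \Delta X_k - v$ for every integer $k\ge 1$, and correspondingly $\bar X^{(1)}_t = X^{(1)}_t - tv$ for all $t\ge 0$. Since the regeneration times satisfy $0<\tau_1<\tau_2<\infty$ with $\tau_1,\tau_2$ integer valued $\pr$-a.s., formula \eqref{eq:discrete iterated integral vs continuous} applies $\pr$-a.s.\ with $m=\tau_1$, $n=\tau_2$; combined with Definition \ref{def:discrete levy area} and the fact that the diagonal terms $\tfrac12\sum_k \Delta\bar X_k\otimes\Delta\bar X_k$ are symmetric and hence annihilated by the antisymmetrization, this gives the matrix identity
\[
A_{\tau_1,\tau_2}(\bar X^{(1)}) = \tfrac12\!\!\sum_{\tau_1<k<\ell\le\tau_2}\!\!\big(\Delta\bar X_k\otimes\Delta\bar X_\ell - \Delta\bar X_\ell\otimes\Delta\bar X_k\big),
\]
and the same identity holds verbatim with $\bar X^{(1)}$, $\Delta\bar X$ replaced by $X^{(1)}$, $\Delta X$.

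Next I would substitute $\Delta\bar X_k=\Delta X_k-v$ into each summand and expand: the $v\otimes v$ contributions cancel under antisymmetrization, and collecting the remaining terms yields
\[
\Delta\bar X_k\otimes\Delta\bar X_\ell - \Delta\bar X_\ell\otimes\Delta\bar X_k = \big(\Delta X_k\otimes\Delta X_\ell - \Delta X_\ell\otimes\Delta X_k\big) + \big(v\otimes\Delta X_k + \Delta X_\ell\otimes v\big) - \big(\Delta X_k\otimes v + v\otimes\Delta X_\ell\big).
\]
Summing over $\tau_1<k<\ell\le\tau_2$, multiplying by $\tfrac12$, and recognizing the first group of terms as $A_{\tau_1,\tau_2}(X^{(1)})$ produces the pathwise identity
\[
A_{\tau_1,\tau_2}(\bar X^{(1)}) = A_{\tau_1,\tau_2}(X^{(1)}) + \tfrac12\!\!\sum_{\tau_1<k<\ell\le\tau_2}\!\!\Big[\big(v\otimes\Delta X_k + \Delta X_\ell\otimes v\big) - \big(\Delta X_k\otimes v + v\otimes\Delta X_\ell\big)\Big].
\]
Taking expectations of both sides and dividing by $\E[\tau_2-\tau_1]$ then gives precisely the claimed decomposition of $\Gamma=\E[A_{\tau_1,\tau_2}(\bar X^{(1)})]/\E[\tau_2-\tau_1]$.

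The only point requiring care — and the main (mild) obstacle — is integrability, which legitimizes passing to expectations. Since $|\Delta X_k|_{\reals^d}\le K$ $\pr$-a.s.\ and $|v|_{\reals^d}\le K$, every summand above has norm bounded by a constant depending only on $K$, so the norm of each of the three sums is at most $C_K(\tau_2-\tau_1)^2$; the moment condition \eqref{eq:momentcondition} (which for $p\ge 4$ in particular yields $\E[(\tau_2-\tau_1)^2]<\infty$) guarantees that all expectations appearing are finite and that the manipulations are justified. No further difficulty is anticipated.
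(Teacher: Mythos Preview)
Your proposal is correct and follows essentially the same approach as the paper: expand $\Delta\bar X_k\otimes\Delta\bar X_\ell$ using $\Delta\bar X_k=\Delta X_k-v$, observe that the $v\otimes v$ contribution is symmetric and hence drops out under antisymmetrization, and read off the claimed decomposition. The paper's proof is in fact much terser (a single line of expansion followed by ``neglecting the symmetric term we get the assertion''), so your version is if anything more complete, particularly in making the integrability step explicit.
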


\begin{proof}
One has
 \begin{align*}
   \Delta \bar X_{\ell}\otimes \Delta \bar X_k =
       \Delta X_{\ell}\otimes \Delta X_k + v^{\otimes 2} - (v\otimes \Delta X_k + \Delta  X_{\ell}\otimes v)
 \end{align*}
since $\bar X_n = X_n - nv$. Neglecting the symmetric term we get the assertion 
%
\end{proof}

\section{Simple applications}\label{sec:examples}
In this section we construct processes lacking or having non-zero area anomaly with a simple but instructive description. For starters, going back to the two processes compared in Chapter 2, the process with four steps clockwise loops every two steps (see Figure \ref{fig:SRWLoop}) have a non-zero area anomaly, while the process which stands still for four steps every two steps have no correction.
For a discrete time process $X$, we remind the reader the notation $X^n(t):=\frac{ X_{\lfloor nt\rfloor}}{\sqrt{n}}+ \frac{1}{\sqrt{n}}\big((nt-\lfloor nt\rfloor)( X_{\lfloor nt\rfloor+1}- X_{\lfloor nt\rfloor})\big)$ .

\subsection{Rough path version of Donsker's Theorem \cite{breuillard2009random,kelly2016rough}}
Consider a discrete time random walk $X$ on $\reals^d$. Assume that
the increments $\Delta X_{n}$ are i.i.d.\ non-zero centered with finite $2p$ moment for some $p\ge4 $.
Then
$\Embedding^{(N)}(X)_\to (B,\mathcal{S})$ in distribution in $\mathcal{C}^{\alpha}\left([0,1],G^2(\mathbb{R}^2)\right),\alpha\in(\frac{1}{3},\frac{1}{2}-\frac{1}{p^*})$, where $p^*=\min(\lfloor p \rfloor , 2 \lfloor p/2 \rfloor )$, $B$ is a $d$-dimensional Brownian motion, and $(\mathcal{S})$ is the Stratonovich second level iterated integral of $B$ in the time interval $[0,1]$. One can see this as a special case of our theorem for the case the regeneration times are the set of natural numbers.

Note that in this example no area correction appears. Since centering a random walk with a drift defines a new random walk with no drift, this example shows that a non-zero area anomaly cannot be created only from the presence of drift.
\subsection{Rotating drift \cite{lopusanschi2017levy}}
The following example shows that non-zero area anomaly is possible even with no speed.
Consider $\Z^2\subset\mathbb{C}$.
Let $(\zeta_n)_n$ be i.i.d.\ so that $P(\zeta_1=1)=p=1-P(\zeta_1=-1)$.
Define $\Delta X_n:=i^n \zeta_n$, $i=\sqrt{-1}$.
Then $X^N \to B$ in distribution in the uniform topology, $B$ is a BM with covariance $2p(1-p)\mathcal{I}$, where
$\mathcal{I}$ is the identity matrix.
However, after rescaling $\Embedding^{(N)}(X) \to (B,\mathcal{S} + \Gamma \cdot)$
in distribution in $\mathcal{C}^{\alpha}\left([0,1],G^2(\mathbb{R}^2)\right),\alpha<1/2$,
where
$\mathcal{S}$ is the Stratonovich second level iterated integral of $B$ in $[0,1]$.

Indeed, $X$ has a regeneration structure for the deterministic times $\tau_k:=4k,k\ge0$, which, trivially, have all moments. One can check that there is a strong law of large numbers with speed $v=0$. Then $\Gamma=\frac{1}{4}\E [A_{0,4}(X)]$, and straight forward computation yields
\[
                                \Gamma= \frac {(2p-1)^2}{4} \left(
                                                \begin{matrix}
                                                0  & 1 \\
                                                -1 & 0 \\
                                                \end{matrix}
                                            \right)
                                        \]
which is non-zero if $p\ne \frac 1 2$.

\subsubsection*{Different presentation: non-elliptic periodic environment}
The same example as above can be presented as a walk in a certain fixed space non-homogenous environment rather than a walk with drift rotating in time. Consider again $\Z^2$ as a subset of $\mathbb{C}$ and fix some $0<p<1$.
 Let $\omega$ be the two-periodic environment given by:
$\om(0,1)=p =1- \om(0,-1)$,
$\om(1,1+i)=p =1- \om(1,1-i)$,
$\om(1+i,i)=p =1- \om(1+i,2+i)$,
and $\om(i,0)=p =1- \om(i,2i)$.
In particular, two-periodicity means that $\om(v,w)= \om(v+z,w)= \om(v,w+z) $ for every $w,v\in \Z+\Z i$ and $z\in 2\Z+ 2\Z i$.
Finally, let $X$ be the Markov chain on $\Z^2$ with transition probabilities $\omega$. Then, by parity, the law of $X$ is the same as the law of the last rotating drift example. In particular, the same result holds for this example as well.

\section{Applications}\label{applications}
\subsection{Random walks in random environment}
We first define random walks in random environment on $\Z^d$.
Let $\mathcal{E}:=\{e_i:i=1,...,2d\}\subset\Z^d$ be the set of neighbors of the origin.
Let $\mathcal{P}_x$ be the space of probability distributions on the algebraic sum $x+\mathcal{E}:=\{x+e:e\in\mathcal{E}\}$. We call
$\Om=\prod_{x\in\Z^d}\mathcal{P}_x$ the space of environments on $\Z^d$. In particular, an environment $\om\in\Om$ is of the form
$\om=(\om(x,x+e))_{x\in\Z^d,e\in\mathcal{E}}$ so that $\om(\cdot,\cdot)\ge 0$ and $\sum_{e\in\mathcal{E}}\om(\cdot,\cdot + e)=1$.

For a fixed environment $\om\in\Om$ and a starting point $x\in\Z^d$ we define a nearest neighbor walk $X$
on $\Z^d$ to be the Markov chain starting at $x$, $P_{x,\om}(X_0=x)=1$, with transition probabilities $P_{x,\om}(X_{n+1}=y+e|X_n=y)=\om(y,y+e)$.
Given a probability distribution $P$ on $\Om$, the \emph{annealed} (and sometimes called also the \emph{averaged}) law of the walk $X$ is characterized by
$\pr_{x}(\cdot):=\int P_{x,\om}(\cdot) \dif P(\om)$.
We also call $P_{x,\om}$ the \emph{quenched} law.
We say that the environment is i.i.d.\ if $(\om_x)_{x\in\Z^d}$ is an i.i.d.\ sequence under $P$.
An i.i.d.\ random environment is called \emph{uniformly elliptic} if there is some
deterministic $\kappa>0$ so that $P(\omega(0,e)\ge\kappa$ for all $e\in\mathcal{E})=1$.

We now define some ballisticity conditions and for that adapt the notation of \cite{berger2014effective}.
Fix $L \ge 0$ and let $\ell \in \mathbb S^{d-1}$ be an element of the unit sphere. Then we write
\begin{equation} \label{eq:exitTimeOneDefX}
 H_L^\ell := \inf\{ n \in \N_0 : X_n \cdot \ell > L\}
\end{equation}
for the first entrance time of $(X_n)$ into the half-space $ \{x \in \Z^d : x \cdot \ell > L\},$
where ${\N_0 = \{0, 1, 2, \ldots\}.}$

\begin{defn}[Sznitman $(T'){|\ell}$ condition \cite{sznitman2002effective}] \label{def:Tgamma}
Let $\gamma\in (0,1]$ and $l\in\mathbb S^{d-1}$.
We say that \emph{condition $(T)_\gamma$} is satisfied with respect to $\ell$, and write $(T)_\gamma|\ell$, if for every $b>0$ and each $\ell'$ in some neighborhood of $\ell$ one has that
\begin{align*}
\limsup_{L \to \infty} L^{-\gamma} \ln \pr_0 \big( H_L^{\ell'} > H_{bL}^{-\ell'} \big) < 0.
\end{align*}
We say that \emph{condition $(T')$} is satisfied with respect to $\ell$, and write
$(T')|\ell$, if condition $(T)_\gamma|\ell$
is fulfilled for every $\gamma\in (0,1)$.
\end{defn}

\begin{defn}[Berger-Drewitz-Ramirez $(P^*_M \vert \ell$) condition \cite{berger2014effective}]\label{def:PCond}
Let $M > 0$ and $\ell \in \mathbb S^{d-1}.$
 We say that {\em condition $P^*_M \vert \ell$} is satisfied
with respect to $\ell$
if
for every $b > 0$ and all $\ell' \in \mathbb S^{d-1}$ in some neighborhood of $\ell,$ one has that
\begin{equation} \label{eq:PCond}	
\limsup_{L \to \infty} L^{M} P_0 \big( H_{bL}^{-\ell'} < H_{L}^{\ell'} \big) = 0,
\end{equation}
\end{defn}

\begin{thm}\label{thm:area anomaly anneald rwre}
Let $X$ be a random walk in i.i.d.\ and uniformly elliptic random environment on $\mathbb{Z}^d$, where $d\ge 2$.
Let $\ell\in\mathbb{S}^{d-1}$ and assume that the Sznitman-type condition $P^*_M \vert \ell$ of
Berger-Drewitz-Ramirez holds for some $M>15d+5$.
Then the conditions of Theorem \ref{thm:area anomaly in general conditions} are satisfied, and moreover
the moment condition holds for all $1\le p<\infty$.
\end{thm}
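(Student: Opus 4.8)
The plan is to verify, one by one, the four hypotheses of Theorem \ref{thm:area anomaly in general conditions}: bounded jumps, existence of a regeneration structure, a law of large numbers (with the stated formula for the speed) together with an annealed invariance principle for the centered walk, and the moment condition \eqref{eq:momentcondition} for all $p$. Bounded jumps are immediate since $X$ is a nearest-neighbor walk on $\Z^d$, so $|X_{n+1}-X_n|_{\reals^d}\le 1$ almost surely; here we only need this under the annealed law $\pr_0$, which is fine. The remaining three points are all consequences of the standard theory of ballistic RWRE, and the only real work is to collect the correct references and to check that the condition $P^*_M\vert\ell$ with $M>15d+5$ is strong enough to feed into them.

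The key step is the regeneration structure. I would invoke the classical construction of regeneration times along direction $\ell$ (Sznitman--Zerner): under transience in direction $\ell$ one defines $\tau_1<\tau_2<\cdots$ so that the pieces $(\tau_{k}-\tau_{k-1},\{X_{\tau_{k-1},\tau_{k-1}+m}:0\le m\le\tau_k-\tau_{k-1}\})$ are independent, and identically distributed for $k\ge 2$ — which is exactly Definition \ref{def:reg structure}. Uniform ellipticity guarantees the walk is genuinely transient in direction $\ell$ (one needs directional transience, which follows from $P^*_M\vert\ell$ via the Kalikow-type / ballisticity machinery of Berger--Drewitz--Ramirez \cite{berger2014effective} and Sznitman \cite{sznitman2002effective}), so the construction applies and $0=\tau_0<\tau_1<\cdots<\infty$ a.s. Given the regeneration structure, the SLLN with speed $v=\E[X_{\tau_1,\tau_2}]/\E[\tau_2-\tau_1]$ is the usual renewal-theoretic argument (provided $\E[\tau_2-\tau_1]<\infty$, which the moment bound below supplies), and the annealed functional CLT for $\bar X_n=X_n-nv$ with the stated covariance $M$ is again classical under finite second moments of the regeneration increments — see Sznitman \cite{sznitman2002effective} and the $(T')$/$P^*_M$ literature. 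So all of these reduce to one quantitative input: tail bounds on $\tau_2-\tau_1$.

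That quantitative input is the heart of the matter, and it is where the hypothesis $M>15d+5$ enters. I would cite the polynomial-ballisticity results showing that under $P^*_M\vert\ell$ (equivalently, under $(T')\vert\ell$ in the regime where these are known to coincide) the regeneration time has a polynomial tail whose exponent grows with $M$; in the strong form, for every finite $p$ there is an $M=M(p,d)$ with $M\le 15d+5$-type threshold beyond which $\E[(\tau_2-\tau_1)^{p}]<\infty$. Concretely, since $P^*_M\vert\ell$ for arbitrarily large $M$ is implied once it holds for one $M>15d+5$ (this is the self-improvement / equivalence-of-ballisticity-conditions theorem — the effective criterion of \cite{berger2014effective}, building on \cite{sznitman2002effective}), one upgrades to $(T')\vert\ell$ and hence to all polynomial moments of regeneration times, which is Sznitman's slowdown estimate \cite{sznitman2000slowdown}. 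Thus \eqref{eq:momentcondition} holds for all $1\le p<\infty$, and in particular for some $p\ge 4$, so Theorem \ref{thm:area anomaly in general conditions} applies and gives convergence in $\mathcal C^\alpha$ for all $\alpha<1/2$.

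The main obstacle — really the only non-bookkeeping point — is citing the right form of the equivalence/self-improvement of ballisticity conditions: one must know that a single polynomial condition $P^*_M\vert\ell$ with $M$ above the explicit threshold $15d+5$ already implies $(T')\vert\ell$ and therefore the full strength of Sznitman's regeneration moment estimates; the numerical value $15d+5$ is exactly the threshold from \cite{berger2014effective}. Everything else (bounded increments, the renewal SLLN, the annealed CLT, and plugging into Theorem \ref{thm:area anomaly in general conditions}) is routine given that input.
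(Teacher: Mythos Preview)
Your proposal is correct and follows essentially the same route as the paper: upgrade $P^*_M\vert\ell$ with $M>15d+5$ to $(T')\vert\ell$ via \cite{berger2014effective}, then invoke the Sznitman--Zerner regeneration structure, the resulting SLLN and annealed CLT, and Sznitman's result that under $(T')$ all moments of the regeneration times are finite. The only minor discrepancy is bibliographic --- the paper attributes the all-moments statement to \cite[Theorem 3.4]{sznitman2001class} rather than \cite{sznitman2000slowdown} --- but the argument is the same.
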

\begin{proof}
Berger, Drewitz, and Ramirez \cite[Theorem 1.6]{berger2014effective} states that in this case the stronger condition $(T'){|\ell}$ of Sznitman also holds.
The law of large numbers, including the existence of regeneration times were proved in \cite{sznitman1999law}
where the independence mentioned only the increments $(X_{\tau_{k-1},\tau_k},\tau_k-\tau_{k-1})$.
However, the proof of \cite{sznitman1999law} shows that the walk on different intervals
$(X_{\tau_{k-1}, \tau_{k-1}+ m}\cdot\ell)_{m\le \tau_k-\tau_{k-1}}$, is independent for $k\ge1$ and identically distributed for
$k\ge 2$, and, moreover, the walk satisfies $X_{\tau_{k-1}, \tau_{k-1}+ m}\cdot\ell>0$ for all $m>0$. This form appears specifically, e.g., in \cite[Claim 3.4]{berger2008limiting}. In particular, $X$ admits a regeneration structure
in the sense of Definition \ref{def:reg structure}.
Annealed invariance principle was proved in \cite[Theorem 4.1]{sznitman2000slowdown} and \cite[Theorem 3.6]{sznitman2001class}
 based on the finiteness of all moments for the regeneration time, which was proved in \cite[Theorem 3.4]{sznitman2001class}.
\end{proof}

\begin{rem}\label{rem:all moments conjecture}
 A version of a well-known conjecture by Sznitman is as follows: For random walks in random environment on $\mathbb{Z}^d$, $d\ge 2$, in i.i.d.\ and uniformly elliptic environment directional transience in some direction $\ell$ is enough for attaining finiteness of \emph{all} moments for the regeneration times.
  Therefore, assuming the conjecture then directional transience in some direction $\ell$ is enough for an
  annealed convergence in the $\alpha$-H\"older rough path topology for all $\alpha<1/2$. In particular, one would not expect an example with a more singular convergence, or, more accurately no example for directionally transient i.i.d uniformly elliptic RWRE for which there are some $\alpha<\beta< 1/2$ so that the convergence holds in $\alpha$-H\"older but not in $\beta$-H\"older.
  \end{rem}

A Dirichlet distribution with parameters $\alpha_1>0,...,\alpha_N>0$ is defined by the density $\varphi$ with respect to Lebesgue measure
on $S^{N-1}:=\{x\in\reals^N:x_i\ge 0, \sum_{i=1}^{N} x_i =1\}$, the $(N-1)$-dimensional simplex, defined by
\[
\varphi(x)= \frac{1}{B(\alpha_1,...,\alpha_N)}\prod_{i=1}^{N}x^{\alpha_1-1},
\]
where $B(\alpha_1,...,\alpha_N)$ is a normalizing constant.
Let $X$ be a random walk in i.i.d.\ random environment so that $\omega_0$ has the Dirichlet distribution with parameters $\alpha_e>0$ for $e\in\mathcal{E}$. Let
\[
\kappa=2\sum_{e\in\mathcal{E}}\alpha_e - \max_{e\in\mathcal{E}}\{\alpha_e+\alpha_{-e}\}.
\]
It is known that in dimension $d\ge 2$ if
\begin{equation}\label{eq:condition for ballisticity for Dirichlet}
\sum_{e\in\mathcal{E}}|\alpha_e-\alpha_{-e}|>1
\end{equation}
then for every direction $\ell$ for which $\sum_{|e|=1}\alpha_e e\cdot \ell >0$
 there is a decomposition of the walk to regeneration intervals in direction $\ell$ in the form that appears in the proof of Theorem \ref{thm:area anomaly anneald rwre} above, and in particular it admits a regeneration structure in the sense of Definition \ref{def:reg structure}. Moreover, the regeneration interval $\tau_2 - \tau_1$ has a finite $p$-th moment if and only if $p<\kappa$, see \cite[Corollary 2]{sabot2016random}. In particular, we have
    \begin{thm}\label{thm:area anomaly anneald Dirichlet rwre}
        Let $X$ be a random walk in i.i.d.\ Dirichlet environment $\mathbb{Z}^d$, where $d\ge 2$. Assume that \eqref{eq:condition for ballisticity for Dirichlet} holds with the parameter $\kappa$ defined above. Assume in addition that $\kappa>8$. Then the conditions of Theorem \ref{thm:area anomaly in general conditions} are satisfied with $p<\kappa/2$.
        In particular, we have a convergence in the $\alpha$-H\"older rough path topology for all $\alpha\in(\frac{1}{3},\frac12 -\frac{1}{(\kappa/2)^*})$, where $(\kappa/2)^*=\min\{\lfloor \kappa/2 \rfloor,2 \lfloor \kappa/4 \rfloor \}$.
    \end{thm}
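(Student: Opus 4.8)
The plan is simply to verify the hypotheses of Theorem \ref{thm:area anomaly in general conditions} one by one and then quote that theorem; the only input specific to Dirichlet environments is the sharp moment estimate of \cite[Corollary 2]{sabot2016random}. The bounded-jump assumption holds trivially with $K=1$ since $X$ is a nearest-neighbour walk. For the regeneration structure I would argue exactly as in the proof of Theorem \ref{thm:area anomaly anneald rwre}: condition \eqref{eq:condition for ballisticity for Dirichlet} ensures, for every $\ell$ with $\sum_{e}\alpha_e\,e\cdot\ell>0$, directional transience in direction $\ell$ together with a Sznitman-type decomposition into regeneration blocks, and one then quotes the refined version of this construction --- as in \cite[Claim 3.4]{berger2008limiting} in the uniformly elliptic case --- in which the \emph{whole} excursions $(X_{\tau_{k-1},\tau_{k-1}+m})_{0\le m\le\tau_k-\tau_{k-1}}$ are independent for $k\ge1$ and identically distributed for $k\ge2$. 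Thus $X$ admits a regeneration structure in the sense of Definition \ref{def:reg structure}.

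Next I would use \cite[Corollary 2]{sabot2016random}, which under \eqref{eq:condition for ballisticity for Dirichlet} gives $\E[(\tau_2-\tau_1)^q]<\infty$ if and only if $q<\kappa$. Since $\kappa>8$, the interval $[4,\kappa/2)$ is non-empty; fix any $p$ in it, so that $2p<\kappa$ and the moment condition \eqref{eq:momentcondition} holds. As $\kappa>2$ we in particular have $\E[(\tau_2-\tau_1)^2]<\infty$, which together with the deterministic bound $|\bar X_{\tau_1,\tau_2}|\le(K+|v|)(\tau_2-\tau_1)$ makes the speed $v$ and the covariance matrix $M$ of Theorem \ref{thm:area anomaly in general conditions} well defined and finite. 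The strong law of large numbers with speed $v$ is then the classical consequence of the i.i.d.\ block decomposition, and the annealed invariance principle for $\bar X_n=X_n-nv$ with covariance $M$ follows from the same decomposition together with the finite second moment of the regeneration length, by the standard regeneration CLT argument.

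All hypotheses being verified, Theorem \ref{thm:area anomaly in general conditions} applied with this $p$ gives the announced annealed convergence $\Embedding^{(N)}(\bar X)\Rightarrow(B_{s,t},\mathcal{S}_{s,t}+(t-s)\Gamma)$ in $\mathcal{C}^{\alpha}([0,T],G^2(\R^d))$ for all $\alpha\in(\tfrac13,\tfrac{p^*-1}{2p^*})$ with $p^*=\min\{\lfloor p\rfloor,2\lfloor p/2\rfloor\}$. Letting $p$ range over all of $[4,\kappa/2)$ and taking $p\uparrow\kappa/2$, the exponent $p^*$ reaches $(\kappa/2)^*=\min\{\lfloor\kappa/2\rfloor,2\lfloor\kappa/4\rfloor\}$, which produces the asserted H\"older range; here $\kappa>8$ is exactly the condition that makes $[4,\kappa/2)$ non-empty, i.e.\ that makes $p\ge4$ admissible.

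I do not expect any serious obstacle --- the statement is essentially a corollary of Theorem \ref{thm:area anomaly in general conditions}. The one point that needs care, and that the proof should not skip, is the precise form of the regeneration structure: it must be the version recording the full path blocks on regeneration intervals, since Definition \ref{def:reg structure} and the single-block expression for $\Gamma$ in Theorem \ref{thm:area anomaly in general conditions} both rely on the independence and (eventual) identical distribution of these blocks rather than merely of the pairs $(\tau_k-\tau_{k-1},X_{\tau_{k-1},\tau_k})$. In the Dirichlet setting this is the same Sznitman-type construction as in the uniformly elliptic case, so nothing new is needed beyond the moment bound of \cite{sabot2016random}.
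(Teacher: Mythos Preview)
Your proposal is correct and follows essentially the same route as the paper: the paper does not give a separate proof environment for this theorem but instead derives it in the paragraph immediately preceding the statement, citing the regeneration structure in the form used in the proof of Theorem \ref{thm:area anomaly anneald rwre} and the sharp moment characterization $\E[(\tau_2-\tau_1)^q]<\infty \iff q<\kappa$ from \cite[Corollary 2]{sabot2016random}, and then invoking Theorem \ref{thm:area anomaly in general conditions}. Your write-up is in fact more careful than the paper's, since you explicitly verify the LLN and annealed invariance principle hypotheses (which the paper leaves implicit as standard consequences of the i.i.d.\ block decomposition with finite second moment) and you spell out why $\kappa>8$ is precisely what is needed to make the moment condition $p\ge 4$ compatible with $2p<\kappa$.
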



    \begin{rem}
    It is relevant to point out here that the $\alpha$-H\"older rough topology is not the only choice one can make (although it is certainly more common). We chose to work with it in this paper due to availability of the results of \cite{breuillard2009random} and \cite{kelly2016rough} which were considered in these settings. However, without going into the details here, let us mention that one can also define a rough path topology using the $p$-variation norm, which is parameterization-free and corresponds to $1/p$-H\"older topology. This was in fact the original definition in \cite{lyons1998differential}. Using some recent available estimates, we believe that one should be able to prove a version of our Theorem \ref{thm:area anomaly in general conditions} in the $p$-variation rough path topology, for every $p>2$, assuming only finiteness of the second moment of the jumps. The last example shows why this might be desirable. On the other hand, in the view of Remark \ref{rem:all moments conjecture}, there's no advantage for $p$-variation rough paths if one is interested in RWRE from the ballistic class.
    \end{rem}

  We close this section with an open problem. As one can notice in the examples given in Chapter \ref{sec:examples},
  to construct a law with non-zero area anomaly it is not enough to have an asymptotic direction or non-trivial covariances.
  Area anomaly might hint that there is some asymmetry in the shape of the path with respect to the asymptotic direction.
  We conjecture that, roughly speaking, any ``reasonably asymmetric" RWRE from the ballistic class considered in Theorem \ref{thm:area anomaly anneald rwre} would have a non-zero area anomaly. However, the following is still an open problem.
  \begin{prob}
   Is there a RWRE satisfying the conditions of Theorem \ref{thm:area anomaly anneald rwre} for which
   the area anomaly $\Gamma$ is non-zero? Note that the question is open even for stationary and ergodic RWRE.
  \end{prob}

\subsection{Periodic graphs or hidden Markov walks}
Theorem \ref{thm:area anomaly in general conditions} naturally generalizes the main results in \cite{lopusanschi2017levy} and \cite{lopusanschi2017area}.

\begin{thm}[\cite{lopusanschi2017area} and \cite{lopusanschi2017levy}]\label{thm:area anomaly periodic graphs or hidden markov walks}
Let $X$ be either an irreducible Markov chain on a periodic graph (see the definition in \cite{lopusanschi2017levy})
or an irreducible hidden Markov walk driven by a finite state Markov chain (see the definition in \cite{lopusanschi2017area}), then
the conditions of Theorem \ref{thm:area anomaly in general conditions} are satisfied.
\end{thm}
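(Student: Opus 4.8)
The plan is to check, for both classes of processes, the full list of hypotheses of Theorem \ref{thm:area anomaly in general conditions}: uniformly bounded jumps, a regeneration structure in the sense of Definition \ref{def:reg structure}, a strong law of large numbers, an annealed invariance principle for the centered walk, and the moment bound \eqref{eq:momentcondition}. Bounded increments are immediate in both cases. An irreducible Markov chain on a periodic graph has increments taking values in the finite set of edge-displacements in one fundamental domain, and an irreducible hidden Markov walk driven by a finite-state chain has increments that are a fixed function of finitely many symbols; in either case we may take $K=\max|\Delta X_1|_{\reals^d}$.

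The core of the argument is the regeneration construction, which is essentially the one already used in \cite{lopusanschi2017area,lopusanschi2017levy}. In both models $X$ is an additive functional of an auxiliary \emph{finite-state} irreducible Markov chain $(\xi_n)$: for the periodic graph this is the projection of the walk onto the finite quotient graph $G/\Lambda$, where $\Lambda$ is the period lattice, and for the hidden Markov walk it is the driving chain itself. Fixing a reference state $a$ of $(\xi_n)$, set $\tau_0=0$ and let $\tau_1<\tau_2<\cdots$ be the successive times $n\ge 1$ with $\xi_n=a$. By the strong Markov property of $(\xi_n)$, and using in the periodic-graph case that the transition law is translation invariant so that, conditionally on $\xi_{\tau_{k-1}}=a$, the law of the future increment path $\{X_{\tau_{k-1},\tau_{k-1}+m}\}$ depends only on $a$, the blocks $\big(\tau_k-\tau_{k-1},\{X_{\tau_{k-1},\tau_{k-1}+m}:0\le m\le\tau_k-\tau_{k-1}\}\big)$ are independent for $k\ge1$ and identically distributed for $k\ge2$; the first block may have a different law only because $\xi_0$ need not equal $a$, which is exactly what Definition \ref{def:reg structure} permits. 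Finiteness of all moments in \eqref{eq:momentcondition} then follows from the classical fact that return times of a finite irreducible Markov chain to a fixed state have a geometric, hence sub-exponential, tail, so $\E[(\tau_2-\tau_1)^{q}]<\infty$ for every $q\ge1$; in particular the hypothesis holds for all $1\le p<\infty$.

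It remains to supply the strong law of large numbers and the annealed invariance principle for $\bar X_n=X_n-nv$. Both follow from the i.i.d.\ block decomposition together with the moment bounds just obtained. Writing $X_{\tau_k}$ as a sum of the i.i.d.\ displacements $Y_j:=X_{\tau_{j-1},\tau_j}$, $j\ge2$, plus a boundary contribution that is negligible (each block has bounded length in $L^q$ and bounded increments), the renewal-reward theorem gives $X_n/n\to \E[Y_2]/\E[\tau_2-\tau_1]=:v$ almost surely, and Donsker's theorem for i.i.d.\ sums evaluated along the renewal times $\tau_k$, combined with $L^2$-control of the fluctuation of $X$ inside a single block, yields the functional CLT for $\bar X^{(N)}$ with covariance $M=\E[\bar Y_2\otimes\bar Y_2]/\E[\tau_2-\tau_1]$, $\bar Y_j=Y_j-(\tau_j-\tau_{j-1})v$; alternatively one invokes the classical CLT for additive functionals of finite-state Markov chains. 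With all hypotheses verified, Theorem \ref{thm:area anomaly in general conditions} applies and gives the announced rough-path convergence, with area anomaly $\Gamma=\E[A_{\tau_1,\tau_2}(\bar X^{(1)})]/\E[\tau_2-\tau_1]$, and since the moment condition holds for every $p$ the convergence takes place in $\mathcal{C}^{\alpha}$ for all $\alpha<1/2$, recovering the main results of \cite{lopusanschi2017area,lopusanschi2017levy}.

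The main obstacle is not analytic but a matter of careful bookkeeping: checking the \emph{precise} independence and stationarity statement of Definition \ref{def:reg structure} — that the entire increment \emph{path} on each block, not merely its length and endpoint displacement, forms an i.i.d.\ sequence — simultaneously for the two models, and in particular handling the translation-covariance in the periodic-graph case so that the finite quotient chain genuinely determines the conditional law of the block. Everything else (bounded jumps, exponential moments of return times, and the LLN and CLT) is standard finite-state Markov chain theory.
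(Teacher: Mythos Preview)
Your proposal is correct and follows essentially the same route as the paper's proof sketch: identify an underlying finite-state irreducible Markov chain, use its successive return times to a fixed state as regeneration times, invoke the geometric tail of these return times to get all moments, and note that the LLN and invariance principle are then routine. The only cosmetic difference is that the paper takes the reference state to be the initial state $X_0$ (so all blocks, including the first, are i.i.d.), whereas you fix an arbitrary state $a$ and allow the first block to differ, which is precisely what Definition~\ref{def:reg structure} accommodates.
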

\begin{proof}[Proof (Sketch)]
If $(Y_n)_n$ is an irreducible Markov chain on a periodic graph or an irreducible hidden Markov walk,
it admit an underlying irreducible Markov chain $(X_n)_n$ on a finite state space.
More precisely, for every $n\geq 1$, the increment $Y_{n+1}-Y_n$ depends on $X_n$ in an appropriate way.

We can thus define a sequence on stopping times for $(X_n)_n$ as
\begin{align*}
 & T_0 = 0 \text{ and }
 T_n = \inf\{k>T_{n-1}: X_k = X_0\},  n\geq 1.
\end{align*}
In particular, it is a sequence of return times to the initial position of $(X_n)_{n\ge 0}$.
By construction, the sequence $(T_n)_{n\ge 0}$ is strictly increasing and, as $(X_n)_{n\ge 0}$ is irreducible,
all $T_n$ are finite a.s.\ The increments $(T_{n+1}-T_n)_{n\ge 0}$ are i.i.d., as well as the variables
$(Y_{T_{n+1}}-Y_{T_n})_{n\ge 0}$ (see the proof in \cite{lopusanschi2017levy}) and, more generally,
\[
\left((Y_{T_n + m}-Y_{T_n})_{0\le m\le T_{n+1}-T_n}, T_{n+1}-T_n \right)_{n\ge 0}.
\]
Consequently the process $(Y_n)_{n\ge 0}$ admits a regeneration structure.

Moreover, since $(X_n)_{n\ge 0}$ is irreducible and takes values on a finite state space, all moments of the
increments $T_{n+1}-T_n$ are finite (they actually have geometric tails).
Concluding the law of large numbers and the invariance principle is now routine.
 \end{proof}

\section{Proof of Theorem \ref{thm:area anomaly in general conditions}}\label{sec:proof}

We shall take the general route of \cite{lopusanschi2017area}, where the authors proved first the convergence
for the path on a sequence of return times with exponential tails, and then moved to the full
path, where they identified an area correction. For both identification of the limit and tightness they used the strong Markov property together with the the tail bounds of the stopping times. To demonstrate the idea in a rather simple way the reader is suggested to think about the case of random walks on a deterministic periodic environment on $\Z^d$, where the decomposition is done according to return times of the walk to the origin \emph{modulu the period}.
In our proof, we decompose the path according to the regeneration times, which are not stopping times and therefore the strong Markov property does not apply. However, as we shall show, the i.i.d.\ nature of our decomposition together with the finiteness of the regeneration time interval moments
are enough to conclude.

\begin{proof}[Proof of Theorem \ref{thm:area anomaly in general conditions}]


The proof will be divided in four steps:
\begin{itemize}
\item  Convergence in distribution of the centered discrete process
given by the sum of $\bar X_{\tau_k,\tau_{k+1}}$ using the rough path version of Donsker's Theorem.
\item Convergence of the finite-dimensional marginals of the subsequence $\left(\Embedding^{(\tau_k)}(\bar X)\right)_{k\geq 1}$,
where we see the area anomaly $\Gamma$.
\item Convergence of finite-dimensional marginals of the full process $\left(\Embedding^{(N)}(\bar X)\right)_{N\ge 1 }$.
\item Tightness of the sequence $\left(\Embedding^{(N)}(\bar X)\right)_{N\geq 1}$.
\end{itemize}

\textbf{Step 1:}
Let $Y_n=:\bar X_{\tau_{n}}$.
We claim that $\Embedding^{(N)}(Y)_{t\leq T} \to (B'_t,\mathcal{S}'_t)_{t\leq T}$ in distribution with respect to $\pr_0$ in $\mathcal{C}^{\alpha}([0,T],G^2(\mathbb{R}^d))$ for all $\alpha\in(\frac{1}{3},\frac{1}{2}-\frac{1}{2p^*})$,
where $B'$ is a Brownian motion with covariance matrix ${\E[ \bar X_{\tau_1,\tau_2} \otimes \bar X_{\tau_1,\tau_2}]}$ and
$\mathcal{S}'$ is its corresponding second level iterated Stratonovich integral.
Indeed, assuming without loss of generality that $(\bar X_{\tau_{1}},\tau_1)$ has the same distribution of $(\bar X_{\tau_{1},\tau_2},\tau_2-\tau_1)$, then
$Y_n=\sum_{i=1}^n \Delta Y_i$ is a sum of i.i.d.\ centered random variables
with values in $\reals^d$ and with covariance $\E[ \Delta Y_1 \otimes \Delta Y_1]={\E[ \bar X_{\tau_1,\tau_2} \otimes \bar X_{\tau_1,\tau_2}]}$.
Moreover, since the jumps are $\pr_0$-a.s.\ bounded $|\Delta X_n|_{\reals^d}\le K$, then $|\Delta Y_n|_{\reals^d}\le R (\tau_n-\tau_{n-1})$ and therefore also have finite $2p$ moment, where $R=R(K,d)$ is some constant.
Applying Theorem 1 of \cite{breuillard2009random} to the process $D^{-1/2}Y_n$, where $D={\E[ \bar X_{\tau_1,\tau_2}\otimes \bar X_{\tau_1,\tau_2}]}$, we get weak convergence of $Y^{(N)}$ in in $\mathcal{C}^{\alpha}\left([0,T],G^2(\mathbb{R}^d)\right)$ for all $\alpha\in(\frac{1}{3},\frac{1}{2}-\frac{1}{2p^*})$.
(Alternatively, Lemma 3.1 of \cite{kelly2016rough} with $V=1$ in the equation appearing there implies the convergence in uniform topology and therefore the convergence of the finite-dimensional marginals, then the tightness in $\mathcal{C}^{\alpha}\left([0,T],G^2(\mathbb{R}^d)\right)$ for all $\alpha\in(\frac{1}{3},\frac{1}{2}-\frac{1}{2p^*})$
is showed in the proof of that lemma using the Kolmogorov Criterion.)


\textbf{Step 2:}
Denote by $\delta_\epsilon$ the standard dilatation by $\epsilon$, that is $\delta_\epsilon (x,a)=(\epsilon x, \epsilon^2 a)$.
By \eqref{eq:group action} and \eqref{eq:discrete iterated integral vs continuous} we have the following decomposition of the rough path lift of $\bar X$
    \begin{align*}
        \delta_{N^{1/2}}\Embedding^{(N)}(\bar X)_{\frac{m}{N}} =
        \bigotimes_{k=1}^{m} \left(\Delta \bar X_k,\dfrac{1}{2}\Delta\bar X_k^{\otimes2}\right)
    \end{align*}

Then, using the properties of integrals for piecewise linear processes, for $r\in\mathbb{N}$, we get the decomposition
     \begin{align} \label{eq:Decomposition}
        \delta_{\tau_r^{1/2}}\Embedding^{(\tau_r)}(\bar X)_1=
        \bigotimes_{k=1}^{r} \left(\Delta Y_k,\dfrac{1}{2}\Delta Y_k^{\otimes 2}\right)
        \otimes  \bigotimes_{k=1}^{r} (0,a_k),
    \end{align}
 where
    \begin{align*}
  a_k = \dfrac{1}{2} \sum_{\tau_{k-1}+1\leq m<n\leq \tau_k} \left(\Delta \bar X_n\otimes \Delta\bar X_m - \Delta\bar X_m\otimes \Delta\bar X_n\right)
    \end{align*}
  is the discrete area between the times $\tau_{k-1}$ and $\tau_k$.
  We note that the first term in the product at the right hand side of \eqref{eq:Decomposition}
  corresponds to the rough path of a partial sum of our i.i.d.\ variables  $\Delta(\bar X_{\tau})_{k}$. We have seen in step $1$ that the sequence of rough paths which is corresponding to these partial sums converges in distribution to the enhanced Brownian motion in the $\alpha$-H\"older topology, which implies that the corresponding finite-dimensional marginals converge in distribution to those of the Brownian motion.

On the other hand, for every fixed $s\in\N$ and $0<t_1<\ldots<t_s$, using the fact that the process $X$ admits a regeneration structure, we conclude that $a_k,k\ge 2,$ are i.i.d., and moreover each coordinate of $a_k$ is bounded by a multiple of $K(\tau_k-\tau_{k-1})^2$, which has a bounded expectation. Thus, by the law of large numbers, we have the following convergence
  \begin{align*}
      \left(\dfrac{1}{r}\sum_{k=1}^r a_{\lfloor t_1k \rfloor} ,...,\dfrac{1}{r}\sum_{k=1}^r a_{\lfloor t_sk \rfloor}\right) \underset{r\to \infty}{\longrightarrow} \E[a_2](t_1,...,t_s) \text{ a.s.}
  \end{align*}
Moreover, the law of large numbers implies $\frac{\tau_k}{k} \to \E[\tau_2-\tau_1]=:\beta$ $\pr_0$-a.s.
 Since $\bigotimes_{k=1}^{r} (0,a_k) = (0,\sum_{k=1}^r a_k)$,
 we can use Slutsky's theorem \cite{slutsky1923stochastische} as in
 \cite[Lemma 2.3.2]{lopusanschi2017area} to conclude that we have the following convergence in distribution
   \begin{align*}
  \left( \Embedding^{(\tau_r)}( \bar X)_{t_1},...,\Embedding^{(\tau_r)}( \bar X)_{t_s}
 \right) \underset{r\to \infty}{\longrightarrow} \left((B_{t_1},\mathcal{S}_{t_1} + t_1\Gamma),\ldots,(B_{t_s},\mathcal{S}_{t_s} + t_s\Gamma)\right)
   \end{align*}
 where $\Gamma=\beta^{-1}\E[a_2]$ is an antisymmetric matrix, $B=\beta^{-1/2} B'$ and $\mathcal{S}$ is
 its corresponding Stratonovich iterated integral.



 \textbf{Step 3:}
 Set
 $\kappa(n)$ to be the unique integer
 such that $\tau_{\kappa(n)}\leq n<\tau_{\kappa(n)+1}$.
 We use \eqref{def:CarnotCaraDistance} together with the fact that $\bar X$ has bounded increments a.s.\ to deduce
 \begin{eqnarray*}
\mathbf{d}\left(\delta_{N^{-1/2}}\left(\Embedding^{(1)}(X)_{\tau_{\kappa(\lfloor Nt \rfloor)}}\right),\delta_{N^{-1/2}}\left(\Embedding^{(1)}(X)_{\intpart{Nt}}\right)\right)
  &=& N^{-1/2}\mathbf{d}\left(\left(\Embedding^{(1)}(X)_{\tau_{\kappa(\lfloor Nt \rfloor)}}\right),\left(\Embedding^{(1)}(X)_{\intpart{Nt}}\right)\right) \\
  &\le&   d K N^{-1/2} (\intpart{Nt} - \tau_{\kappa(\intpart{Nt})}).
 \end{eqnarray*}
Applying the Markov inequality we obtain the following convergence for any $\epsilon>0$
    \begin{align*}
 & \mathbb{P}\left(\mathbf{d}\left(\delta_{N^{-1/2}}\left(\Embedding^{(1)}(X)_{T_{\kappa(\lfloor Nt \rfloor)}}\right),
 \delta_{N^{-1/2}}\left(\Embedding^{(1)}(X)_{\intpart{Nt}}\right)\right) >\epsilon\right)
      \leq
      Kd\dfrac{\E[\intpart{Nt} - \tau_{\kappa(\intpart{Nt})}]}{N^{1/2}\epsilon}
    \le
Kd\dfrac{\E[\tau_{\kappa(\intpart{Nt})+1} - \tau_{\kappa(\intpart{Nt})}]}{N^{1/2}\epsilon}.
     \end{align*}
Note that $\E[\tau_{\kappa(\intpart{Nt})+1} - \tau_{\kappa(\intpart{Nt})}]\le (\E[\tau_{2}^3]t)^{1/3}  N^{1/3}$ as $\tau_2$ has a finite third moment.
Indeed,
\begin{align*}
\E[\tau_{\kappa(\intpart{Nt})+1} - \tau_{\kappa(\intpart{Nt})}]
& = \sum_{k=0}^{Nt} \E[(\tau_{k+1} - \tau_{k})\one_{\kappa(\intpart{Nt})=k}]\\
& \le \sum_{k=0}^{Nt} \E[(\tau_{k+1} - \tau_{k})^3]^{1/3}\pr({\kappa(\intpart{Nt})=k})^{2/3}\\
& \le \E[\tau_{2}^3]^{1/3}\sum_{k=0}^{Nt} \pr({\kappa(\intpart{Nt})=k})^{2/3}\\
& \le \E[\tau_{2}^3]^{1/3} (Nt)^{1/3}. 
\end{align*}
Therefore, 
   \begin{align*}
 & \mathbb{P}\left(\mathbf{d}\left(\delta_{N^{-1/2}}\left(\Embedding^{(1)}(X)_{T_{\kappa(\lfloor Nt \rfloor)}}\right),
 \delta_{N^{-1/2}}\left(\Embedding^{(1)}(X)_{\intpart{Nt}}\right)\right) >\epsilon\right)
      \leq
 \dfrac{Kd(\E[\tau_{2}^3]t)^{1/3}}{N^{1/6}\epsilon}\to 0 \text{ as }N\to\infty.
     \end{align*}
Next, using the strong law of large numbers together with the decomposition of $\tau_{k}=\sum_{\ell=1}^k (\tau_{\ell}-\tau_{\ell-1})$ into independent variables, with the same distribution for $\ell>1$, one deduces that
$\kappa(n)/n$ converges a.s.\ to $\beta^{-1}$.
Hence the conclusion of Step $2$ together with Slutsky's Theorem \cite{slutsky1923stochastische}
imply the convergence in distribution
   \begin{align*}
   \Embedding^{(N)}(X) \to (B_t,\mathcal{S}_t + t\Gamma )
   \end{align*}
for any fixed $t\in [0,T]$.
Extending the convergence to all finite-dimensional marginals of $\Embedding^{(N)}(X)$
is done similarly using Slutsky's Theorem on $\reals^d\times \reals^{d\otimes d}$.

\textbf{Step 4: }
It is left to prove the tightness of the process. In order to do this, we use the
Kolmogorov tightness criterion for rough paths \cite[Theorem 3.10]{friz2014course}.
That is, in order to obtain tightness for $\alpha<\dfrac{p^*-1}{2p^*}$.
it is enough to show that there exists a positive constant $c$ such that, for all $0\leq s<t\leq T$,
\begin{equation}\label{eq: Kolmog condition}
\sup_N \E\left[\norm {\Embedding^{(N)}( X)_{s,t}}^{2p^*}\right] \le
 c  |t-s|^{p^*}.
\end{equation}
To avoid heavy notation we write $\mathbf{X}_{s,t}:=\Embedding^{(1)}(X)_{s,t}$
and assume without loss of generality that $\tau_1$ has the same distribution as $\tau_k - \tau_{k-1}$ for $k>1$.
From the definition of iterated integral and the fact the paths are linear interpolations of discrete paths
proving \eqref{eq: Kolmog condition} boils down to showing that there is a constant $c$ so that
\[
  \E\left[\norm{\mathbf{X}_{\ell,k}}^{2p^*}\right] \le c (k-\ell)^{p^*}
\]
uniformly on $0\le \ell<k\le NT$.
Note that by the i.i.d\ regeneration structure
\[
  \E\left[\norm{\mathbf{X}_{\tau_\ell,\tau_k}}^{2p^*}\right] =   \E\left[\norm{\mathbf{X}_{\tau_k-\tau_\ell}}^{2p^*}\right].
\]
The tightness argument \cite[Step 2 in Chapter 3]{breuillard2009random} then immediately implies
\[
  \E\left[\norm{\mathbf{X}_{\tau_k}}^{2p^*}\right] = O(k^{p^*}),
\]
where we used the fact that $\E[\tau_k^{p^*}] = O ( k^{p^*})$.
Next, if $k,\ell$ are in the same regeneration interval, the fact that the jumps are bounded,
regeneration intervals have finite $2p^*$ moments, and the definition \eqref{def:CarnotCaraDistance} imply
\[
  \E\left[\norm{\mathbf{X}_{\ell,k}}^{2p^*} \one_{\kappa(\ell)=\kappa(k)}\right] \le C_p'
\]
for some constant $C_p'$.
Therefore by sub-additivity \eqref{eq:CC norm subadditive},
and using H\"older's inequality together with \eqref{eq:CC norm bound} we can find a constant $C_{2p^*}$ so that
\[
  \E\left[\norm{\mathbf{X}_{\ell,k}}^{2p^*}\right]
    \le C_{2p^*}\Big(2C_p'  + \E\left[\norm{\mathbf{X}_{\tau_{\kappa(\ell)},\tau_{\kappa(k)+1}}}^{2p^*}\right]\Big)
    = O((k-\ell)^{p^*}).
\]

We conclude that the Kolmogorov criterion is satisfied and so the sequence $(\Embedding^{(N)}(X))_N$ is tight in $\mathcal{C}^{\alpha}\left([0,T],G^2(\mathbb{R}^d\right)$, $\alpha<\dfrac{p^*-1}{2p^*}$.
\end{proof}

\section{Acknowledgments}\label{sec:acknowledgements}
The research of T.O.\ was funded by the German Research Foundation through the research unit
FOR 2402 – Rough paths, stochastic partial differential equations and related topics.
We would like to thank Damien Simon for a stimulating email exchange which contributed to the current presentation of the paper.
\bibliography{AreaAnomaly}
\bibliographystyle{alpha}




\end{document}